\definecolor{darkblue}{RGB}{32,57,231}
\newtheorem{theorem}{Theorem}[section]
\newtheorem{proposition}[theorem]{Proposition}
\newtheorem{lemma}[theorem]{Lemma}
\newtheorem{corollary}[theorem]{Corollary}
\theoremstyle{definition}
\newtheorem{definition}[theorem]{Definition}
\newtheorem{example}[theorem]{Example}
\newtheorem{remark}[theorem]{Remark}
\newtheorem{notation}[theorem]{Notation}
\numberwithin{equation}{section}
\DeclareMathOperator*{\esup}{esup}
\DeclareMathOperator*{\diam}{diam}
\newcommand{\one}{\mathds{1}}
\def\diag{\mathrm{diag}}
\newcommand*{\dif}{\mathop{}\!\mathrm{d}}
 \def\sE {{\mathcal E}} \def\sF {{\mathcal F}}
  \def\rL {{\mathscr L}}
 \def\bE {{\mathbb E}}
 \def\bN {{\mathbb N}} 
\def\bP {{\mathbb P}}  \def\bR {{\mathbb R}}
\newcommand\restr[2]{{
		\left.\kern-\nulldelimiterspace 
		#1 
		\vphantom{\big|}
		\right|_{#2}
}}
\date{\today}
\font\titlefont=cmbx12 scaled 1400
\title{\titlefont Spectral bounds and heat kernel upper estimates for Dirichlet forms}
\author{Aobo Chen and Zhenyu Yu}
\begin{document}

\maketitle

\vspace{-0.7cm}

\begin{abstract}
We use a Harnack-type inequality on exit times and spectral bounds to characterize upper bounds of the heat kernel associated with any regular Dirichlet form without killing part, where the scale function may vary with position. We further show that this Harnack-type inequality is preserved under quasi-symmetric changes of metric on uniformly perfect metric spaces. This {generalizes} the work of Mariano and Wang [Stochastic Process. Appl. 189 (2025) 104707].

	\vskip0.2cm
\noindent {\it Keywords:} Markov processes, Dirichlet forms, heat kernel estimates, spectral bounds
	\vskip0.2cm
\noindent {\it Mathematical Subject Classification (2020):} 31C25, 30L15, 35K08

\end{abstract}

\section{Introduction}

The fundamental solution of the heat equation (or heat kernel) $\partial_{t} u=\Delta u$ on $\mathbb{R}^{n}$ is {the} Gauss--Weierstrass kernel 
\begin{equation}
p_{t}(x,y)=\frac{1}{(4\pi t)^{n/2}}\exp\left(-\frac{\lvert x-y\rvert^{2} }{4t}\right).
\end{equation}
 For many Laplacians on fractals and metric measure {spaces} (such as uniformly elliptic, divergence-form {operators} on $\mathbb{R}^{n}$, the standard Laplacian on the Sierpi\'nski gasket \cite{BP88} and carpet \cite{BB92}), the following \emph{sub-Gaussian} upper bound {for the} heat kernel usually holds for some $\beta>1$:
\begin{equation}\label{UEloc}
p_{t}(x,y)\leq \frac{C}{\mu(B(x,t^{1/\beta}))}\exp\left(-c\left(\frac{d(x,y)^{\beta}}{t}\right)^{1/(\beta-1)}\right),
\end{equation}
where $d$ is the metric and $\mu(B(x,r))$ is the volume of the ball centered at $x$ with radius $r>0$. {There is a large literature} devoted to the study of sub-Gaussian heat kernel bounds on various spaces, for example, \cite{Gri09} on {Riemannian manifolds}, \cite{Bar98} on the Sierpi\'nski gasket, and \cite{AB15, GHL14,GHL15, BCS15, MSC17} on equivalent characterizations of heat kernel upper bounds for strongly local Dirichlet forms on general metric measure spaces. In particular, in \cite[Conjecture 3.9]{GHL14}, Grigor'yan, Hu and Lau conjecture that the heat kernel upper bound in \eqref{UEloc} is equivalent to the conjunction of {an} isoperimetric inequality (named \emph{Faber--Krahn inequality} \eqref{FK}) and the \emph{regular decay of {the bottom of the spectrum of} balls} \eqref{lambda} {for a regular, strongly local, conservative Dirichlet form when $\mu$ is Ahlfors regular}. Recently, Mariano and Wang \cite[Proposition 7.8]{MW25} proved that for unbounded spaces, \eqref{UEloc} is equivalent to the conjunction of these two conditions plus a \emph{Harnack-type inequality for exit times}.

In this paper, we are mainly interested in generalizing \cite[Proposition 7.8]{MW25} to non-local settings. When a {long-range} non-local term is involved, the sub-Gaussian bound
\eqref{UEloc} no longer holds. Instead, a \emph{stable-like} heat kernel upper bound should take its place:
\begin{equation}\label{e.UEpoly}
p_{t}(x,y)\leq C \left( \frac{1}{\mu(B(x, t^{1/\beta}))} \wedge \frac{t}{\mu(B(x,
d(x,y))) d(x,y)^{\beta}} \right)\end{equation}
for some positive constant $C$ and $\beta$. There {is also a large literature devoted to the study of such upper bounds}, for example \cite{BGK09} for jump processes, and \cite{HL18,CKW21, HL22, KW23, GHH24c} for regular Dirichlet forms containing non-local parts on doubling metric measure spaces.

It is natural to consider the non-local analogue of \cite[Conjecture 3.9]{GHL14}. In fact, we will work in a more general setting. For a regular, conservative Dirichlet form without a killing part on metric measure spaces with \emph{volume doubling property} and \emph{reverse volume doubling property}, we will prove in Theorem \ref{t.main} that the stable-like heat kernel upper bound \eqref{e.UEpoly} is equivalent to the conjunction of Faber--Krahn inequality \eqref{FK}, the decay of {the bottom of the spectrum} \eqref{lambda}, the Harnack-type inequality \eqref{EM} for exit times and an upper bound for jump kernels \eqref{J<}. This Harnack-type inequality is not easy to verify for a single space. Nevertheless, for uniformly perfect metric spaces, we prove that \eqref{EM} is \emph{invariant under quasi-symmetric transformation of metrics}. Since quasi-symmetry theory plays an important role \cite{Kig12, KM23}, we expect that the quasi-symmetric invariance of \eqref{EM} could also help in understanding heat kernel bounds in future works. 

Another interesting result in \cite{MW25} is to obtain tail estimates of survival probability at large times. Let $D$ be an open subset of a metric measure space {such that the bottom of the spectrum of $D$, denoted by $\lambda(D)$}, is positive. For Brownian motion, $\lambda(D)$ determines the {long-time} behavior of the heat kernel and the survival probability:
\begin{equation}
 \lim_{t\rightarrow\infty}\frac{\log \mathbb{P}_x(\tau_D > t)}{t}=-\lambda(D),\ \text{for $\mu$-{a.e.} $x\in D$,} \label{pt:long}
\end{equation}
where $\tau_D$ is the first time of exiting $D$, and $\mathbb{P}_x(\tau_D > t)$ is the survival probability starting at $x$ and has not {left} $D$ up to time $t$. {We refer to \cite[Theorem 10.24]{Gri09} and \cite[Theorem 4.25]{Dav90} for similar results on weighted manifolds and on spaces with finite total mass, respectively.} Mariano and Wang \cite[Proposition 5.2]{MW25} prove that \eqref{pt:long} holds for {\emph{every} open subset} $D$ of unbounded doubling spaces, under the irreducibility and the sub-Gaussian heat kernel upper bound \eqref{UEloc} of strongly local regular Dirichlet forms. Moreover, they obtain in \cite[Theorem 3.1]{MW25} the following {asymptotic} estimate: 
\begin{equation}
\esup_{x \in D} \mathbb{P}_x(\tau_D > t) \leq 
C\left(1+\frac{2\lambda(D)}{d}t\right)^{d}\exp\left(-t\lambda (D)\right),\label{eq:th5-2}
\end{equation}
for some positive constants $C$ and $d$ independent of $t$ and $D$. 

In our framework, by following the proof of \cite{MW25} and using the stable-like heat kernel upper bound, a weaker upper bound for survival probability can be shown in Theorem \ref{thm:5}, that is
\begin{equation}
\esup_{x \in D} \mathbb{P}_x(\tau_D > t) 
 \leq C\exp\left(-\frac{1}{2}\kappa t\lambda (D)\right),\label{eq:th5-1}
\end{equation}
for some constants $\kappa\in (0,1)$ and $C>0$. 

The metric spaces we {work} with may be bounded or unbounded. We also emphasize the following novelties of this paper:
\begin{itemize}
\item Our results cover both diffusion and jump processes. Analytically, we work with regular Dirichlet forms without killing part.
\item Our main result is stated and proved for a general volume function $V(x,r)$ satisfying the volume doubling condition as well as for a general scale function $W(x,r)$ (that replaces $r^{\beta}$) that may depend on the position $x$, which covers many examples of metric measure spaces.
\end{itemize}

The structure of this paper is as follows. In Section \ref{Sect-1}, we introduce some basic conditions and our main results. In Section \ref{Sect-3}, we obtain the large time heat kernel upper bound and prove Theorem \ref{thm:5}. In Section \ref{Sect-4}, we prove Theorem \ref{t.main}. In Section \ref{s.DisEm}, we show that \eqref{EM} is preserved under quasi-symmetric changes of metric on uniformly perfect metric spaces and give {three} examples to illustrate {our results}.

\begin{notation}
	Throughout this paper, we use the following notation.
	\begin{enumerate}[label=\textup{({\arabic*})},align=right,leftmargin=*,topsep=5pt,parsep=0pt,itemsep=2pt]
		\item  The symbols $\subset$ and $\supset$ for set inclusion allow the case of equality.
 		\item Let $X$ be a non-empty set. We define $\one_{A}:X\to\{0,1\}$ for $A\subset X$ by
	 \[\one_{A}(x):= \begin{cases}
	 	1 & \mbox{if $x \in A$,}\\
	 	0 & \mbox{if $x \notin A$.}
	 \end{cases} \]
	 \item Let $U$ and $V$ be two open subsets in a topological space, if $U$ is precompact and the closure of $U$ is contained in $V$, then we write $U\Subset V$.
		\item Let $X$ be a topological space. We set
		$C(X):=\{\textrm{$f:X\to\mathbb{R}$ and $f$ is continuous}\}$ and
		$C_c(X):=\{f\in C(X): \textrm{$X\setminus f^{-1}(0)$ has compact closure in $X$}\}$.
		\item In a metric space $(X,d)$, $B(x,r)=B_{d}(x,r)$ is the open ball centered at $x \in X$ of radius $r>0$. For $A\subset X$, the \emph{diameter} of $A$ is defined by $\diam(A):=\sup_{x,y\in A}d(x,y)$. 
	  \item Given a ball $B:=B(x,r)$ and $K>0$, by $KB$ we denote the ball $B(x,Kr)$.
	\end{enumerate}
\end{notation}

\section{The framework and main results}\label{Sect-1}
{Let $(M,d,\mu)$ be a metric measure space, that is, $(M,d)$ is a locally compact, separable metric space and $\mu $ is a Radon measure with full support on $M$. Assume furthermore that $M$ contains at least two points.} We always assume that every metric ball $B(x,r)$ is \emph{precompact}. We fix throughout this paper a number $\overline{R}:=\diam(M)\in(0,\infty]$.

We consider a regular Dirichlet form $(\mathcal{E},\mathcal{F})$ on $L^{2}(M,\mu)$ without killing part, which can be written by the Beurling--Deny decomposition \cite[Theorems 3.2.1 and 4.5.2]{FOT11} as follows:
\begin{equation}
\mathcal{E}(u,v)=\mathcal{E}^{(L)}(u,v)+\mathcal{E}^{(J)}(u,v),\ \text{for all } u,v\in \mathcal{F},
\end{equation}
where $\mathcal{E}^{(L)}$ is the \emph{local part} and $\mathcal{E}^{(J)}$ is the \emph{jump part} determined by a unique Radon measure $j$ on $(M\times M)\setminus\diag$:
\begin{equation}
\mathcal{E}^{(J)}(u,v)=\underset{(M\times M)\setminus\diag}{\int}(u(x)-u(y))(v(x)-v(y)) j(\dif x,\dif y), \ \text{for all } u,v\in \mathcal{F}\cap C_{c}(M).\label{EJ}
\end{equation}
{Here} $\diag:=\{(x,x):x\in M\}$.

A measurable function $J(x,y)$ that is \emph{pointwise defined} on $(M\times M)\setminus\diag$ is called a \emph{jump kernel} if 
\begin{equation*}
 j(\dif x,\dif y)=J(x,y)\mu (\dif y)\mu (\dif x)\text{ \ on }(M\times M)\setminus\diag.
\end{equation*}

By \cite[Section 1.3]{FOT11}, there is a unique strongly continuous Markovian semigroup $\{P_{t}\}_{t\geq 0}$ on $L^{2}(M,\mu)$ that corresponds to $(\mathcal{E},\mathcal{F})$. Since the Dirichlet form is regular, Fukushima's theorem \cite[Theorem 7.2.1]{FOT11} gives a $\mu$-symmetric Hunt process $X=(\{X_{t}\}_{t\geq0}, \{\mathbb{P}_{x}\}_{x\in M})$ on $M$ associated with $(\mathcal{E},\mathcal{F})$, in the sense that for any $t>0$ and any bounded and $L^{2}$-integrable function $u$ on $M$, \begin{equation}
x\mapsto \mathbb{E}^{x}[u(X_{t})] \text{ is a \textit{$\mathcal{E}$-quasi-continuous $\mu$-{version}} of } P_{t}u.
\end{equation}

Recall $\overline{R}$ is defined as the diameter of $M$ under metric $d$.
\begin{definition}\label{d.vd+rvd}
Let $(M,d)$ be a metric space and let $\mu$ be a Borel measure on $M$.
\begin{enumerate}[label=\textup{({\arabic*})},align=right,leftmargin=*,topsep=5pt,parsep=0pt,itemsep=2pt]
	\item We say that $\mu$ satisfies the \emph{volume doubling property} \eqref{VD}, if there exists $C_{\mathrm{VD}} \in (1,\infty)$ such that for all $x \in M$ and $r>0$,
	\begin{equation} \label{VD} \tag{$\operatorname{VD}$}
	0 < V(x,2r) \le C_{\mathrm{VD}} V(x,r) <\infty,
	\end{equation}
where $V(x,r):=\mu (B(x,r))$ for the open metric ball $B(x,r)$ defined by 
\begin{equation*}
B(x,r):=\{y\in M:d(y,x)<r\}.
\end{equation*}
	\item We say that $\mu$ satisfies the \emph{reverse volume doubling property} \eqref{RVD}, if there exist $C_{\mathrm{RVD}}  \in (0,1], \alpha_{1} \in (0,\infty)$ such that for all $x\in M$ and $0<r \le R  <\overline{R}$,
	\begin{equation} \label{RVD} \tag{$\operatorname{RVD}$}
	V(x,R) \ge C_{\mathrm{RVD}} \left(\frac{R}{r}\right)^{\alpha_{1}} V(x,r).
	\end{equation}
\end{enumerate}
\end{definition}

We remark that if $\mu$ satisfies the {volume doubling property} \eqref{VD}, then for all $x,y\in M$ and $0<r\leq R<\infty $,
	\begin{equation} \label{e.VDip}
	\frac{V(x,R)}{V(y,r)} \leq C_{\mathrm{VD}} \left( \frac{d(x,y)+R}{r}\right)^{\alpha},
	\end{equation}
where $\alpha=\log_2 C_{\mathrm{VD}}$, see for example \cite[Proposition 5.1]{GH14}.

A function $W:M\times \lbrack 0,\infty )\rightarrow \lbrack 0,\infty)$ is a \emph{scale function}, if
\begin{itemize}
\item for each point $x\in M$, $W(x,\cdot )$ is strictly increasing with $W(x,0)=0$ and 
\begin{equation}
{W(x,\infty):=\lim_{r\uparrow\infty}  W(x,r)=\infty}; \label{e.w}
\end{equation}
\item there exist positive constants $C\geq1$
and $\beta _{2}\geq \beta _{1}$ such that for all $0<r\leq R<\infty $ and
all $x,y\in M$ with $d(x,y)\leq R$,
\begin{equation}
C^{-1}\left( \frac{R}{r}\right) ^{\beta _{1}}\leq \frac{W(x,R)}{W(y,r)}\leq
C\left( \frac{R}{r}\right) ^{\beta _{2}}.  \label{eq:vol_0}
\end{equation}
\end{itemize}
Let $W^{-1}(x,\cdot )$ be the inverse function of $W(x,\cdot )$. It is easy to see that 
for all $0<r\leq R<\infty $ and all $x\in M$, 
\begin{equation}
C^{-1/\beta _{2}}\left( \frac{R}{r}\right) ^{1/\beta _{2}}\leq \frac{%
W^{-1}(x,R)}{W^{-1}(x,r)}\leq C^{1/\beta _{1}}\left( \frac{R}{r}\right)
^{1/\beta _{1}}.  \label{w-1}
\end{equation}

We say that condition \eqref{C} holds if $(\mathcal{E},\mathcal{F})$ is \emph{conservative}, that is \begin{equation}\label{C}\tag{$\operatorname{C}$}
P_{t}\one_{M} =\one_{M},\ \text{for each}\ t>0,
\end{equation}
where $P_{t}\one_{M}$ is defined by {extending} $\{P_{t}\}$ from $L^{2}(M,\mu)$ to $L^{\infty}(M,\mu)$ using the Markovian property.

{Next, we introduce the definition of \emph{$\mathcal{E}$-nest} (see \cite[Section 2.1]{FOT11} for details). For any open subset $\Omega\subset M$, define the \emph{$1$-capacity} of $\Omega$ by
\begin{equation*}
\mathrm{Cap}_1(\Omega) := \inf \left\{\mathcal{E}_{1}(u,u): u\in \mathcal{F} \text{ and }u\geq 1 \ \ \mu\text{-a.e. on } \Omega\right\},
\end{equation*}
where $\mathcal{E}_{1}(\cdot ,\cdot ):=\mathcal{E}(\cdot ,\cdot )+\langle\cdot
,\cdot \rangle_{L^{2}(M,\mu)}$. An increasing sequence of closed subsets $\{F_k\}_{k=1}^\infty$ of $M$ is called an \emph{$\mathcal{E}$-nest} on $M$ if $\lim_{k\rightarrow\infty} \mathrm{Cap}_1(M\setminus F_k)=0$.}

The following definition of \emph{pointwise heat kernel} is from \cite[Definition 6.1]{GHH24b}. 
\begin{definition}\label{phk}
Let $(\mathcal{E}, \mathcal{F})$ be a Dirichlet form on $L^{2}(M,\mu)$ and let $\{P_{t}\}_{t>0}$ be the associated strongly continuous Markovian semigroup on $L^{2}(M,\mu)$. We say a function $p_t(x, y)$ of three variables $(t, x, y) \in (0,\infty
)\times M\times M$ is a \emph{pointwise heat kernel} of $(\mathcal{E}, \mathcal{F})$ if there exists a regular $\mathcal{E}$-nest $\{F_k \}_{k=1}^\infty$ (independent of $t$) such that the following statements are true for \emph{all} $t,s > 0$ and \emph{all} $x, y \in M$.
\begin{enumerate}[label=\textup{(HK{\arabic*})},align=right,leftmargin=*,topsep=5pt,parsep=0pt,itemsep=2pt]
\item\label{it.HK1} If one of {the} points $x,y$ lies outside $\bigcup_{k=1}^{\infty}F_k$, then $p_t(x,y)=0$.

\item\label{it.HK2} (\emph{Quasi-continuity}): $\displaystyle p_t(x,\cdot)\big|_{F_k}$ is continuous for each $k$.

\item\label{it.HK3} (\emph{Measurability}): $p_t(\cdot,\cdot)$ is jointly measurable on $M\times M$.

\item\label{it.HK4} (\emph{Markov property}): $p_t(x, y)\geq 0$ and $\displaystyle\int_Mp_t(x, y)\mu(\dif y)\leq 1$.

\item\label{it.HK5} (\emph{Symmetry}): $p_t(x, y) = p_t(y, x)$.

\item\label{it.HK6} (\emph{Semigroup property}): $\displaystyle p_{s+t}(x,y)=\int_Mp_s(x,z)p_t(z,y)\mu(\dif z)$.

\item\label{it.HK7} For any $f\in  L^2(M,\mu)$ and any $t>0$, we have
\begin{equation}
\text{the function } \int_Mp_t(\cdot,y)f(y)\mu(\dif y)\;\bigg|_{F_{k}}\ \text{is continuous for each $k$}
\end{equation}
and
\begin{equation}
P_tf(x)=\int_{M}p_t(x,y)f(y)\mu(\dif y),\  \text{for $\mu$-{a.e.}} \ x\in M.
\end{equation}
\end{enumerate}
\end{definition}
\emph{Throughout this paper, a heat kernel always refers to a pointwise heat kernel}.
\begin{definition}
We say that condition \eqref{UE} holds if there exists a heat kernel $p_{t}(x,y)$ and $C_{\mathrm{UE}} > 0$ such that for all $x, y \in M$ and all $0 < t < W(x,\overline{R})\wedge W(y,\overline{R})$,
\begin{equation}\label{UE}\tag{$\operatorname{UE}$}
p_t(x,y) \leq C_{\mathrm{UE}}\left( \frac{1}{V(x, W^{-1}(x,t))} \wedge \frac{t}{V(x,
d(x,y)){ W(x,d(x,y))}} \right).
\end{equation}
\end{definition}

We define the \emph{on-diagonal upper estimate} of the heat kernel.
\begin{definition}
We say that condition \eqref{DUE} holds
if there exist a heat kernel $p_{t}(x,y)$ and $C_{\mathrm{DUE}}>0$ such that for all 
$x\in M$ and all $0<t<W(x,\overline{R})$,
\begin{equation}\label{DUE} \tag{$\operatorname{DUE}$}
p_t(x,x) \leq \frac{C_{\mathrm{DUE}}}{V(x, W^{-1}(x,t))}.
\end{equation}
\end{definition}
An equivalent formulation of \eqref{DUE} is the following: there exist a heat kernel $p_{t}(x,y)$ and $C_{\mathrm{DUE}}>0$ such that for all $x,y\in M$ and all $0<t<W(x,\overline{R})\wedge W(y,\overline{R})$,
\begin{equation}\label{e.DUE+}
p_t(x,y) \leq \frac{C_{\mathrm{DUE}}}{\sqrt{V(x, W^{-1}(x,t))}\sqrt{V(y, W^{-1}(y,t))}}.
\end{equation}

For any non-empty open subset $\Omega$ of $M$, denote by $\mathcal{F}(\Omega)$ the closure of $\mathcal{F}\cap C_{c}(\Omega)$
 in the norm $\sqrt{\mathcal{E}_{1}}$. The form $(\mathcal{E},\mathcal{F}(\Omega))$ is a regular
Dirichlet form in $L^{2}(\Omega,\mu )$ if $(\mathcal{E},\mathcal{F})$ is regular (see \cite[Theorem 4.4.3]{FOT11}). The Hunt process $X^{\Omega}$ corresponding to $(\mathcal{E},\mathcal{F}(\Omega))$ is the \emph{part {process}} of $X$ on $\Omega$ \cite[Theorem 3.3.8]{CF12}, that is 
\begin{equation}\label{e.Huntpart}
X^{\Omega}_{t}=\begin{dcases}
X_{t},\quad &\text{if } t<\tau_{\Omega}\\
\Delta,\quad &\text{if }t\geq\tau_{\Omega},
\end{dcases}
\end{equation}
where $\Delta$ is the \emph{cemetery point} in the theory of Markov process \cite[Appendix A]{CF12} and $\tau_{\Omega}$ is the first exit time defined by 
\begin{equation}
\tau_{\Omega}:=\inf\{t>0:X_{t}\in M\setminus \Omega\}. \label{exit}
\end{equation}
Denote by $\{P_t^{\Omega}\}_{t\geq0}$ the semigroups of $(\mathcal{E},\mathcal{F}(\Omega))$, and by $p_t^{\Omega}(x, y)$ the
corresponding heat kernel. Denote the \emph{{bottom of the spectrum of $\Omega$}} by 
\begin{equation}
\lambda(\Omega):=\inf_{u\in \mathcal{F}(\Omega)\setminus \{0\}}\frac{\mathcal{E}%
(u)}{\lVert u\rVert_{L^{2}(\Omega,\mu)}^{2}}.  \label{lam1}
\end{equation}

{In the following several conditions, the letter $\sigma$ is used to denote the small numbers in $(0, 1)$ in several different conditions, and we will assume their values are the same, otherwise we take the minimum of them.}

\begin{definition}
We say that condition \eqref{lambda} holds if there exist constants $C\geq1$ and $\sigma
\in (0,1)$ such that for all $x\in M$ and $r\in (0,\sigma \overline{R })$,
\begin{equation}
C^{-1} W\left(x,r\right)^{-1} \leq \lambda\left(B\left(x,r\right)\right)\leq C{W\left(x,r\right)^{-1}}.\label{lambda}\tag{$\operatorname{\lambda}$}
\end{equation}
{We say that conditions (\hypertarget{L>}{}\hyperlink{L>}{$\operatorname{\lambda_{\geq}}$}) and (\hypertarget{L<}{}\hyperlink{L<}{$\operatorname{\lambda_{\leq}}$}) hold if the lower bound and upper bound in \eqref{lambda} hold, respectively.}
\end{definition}

\begin{definition}[Faber--Krahn inequality]
We say that condition \eqref{FK} holds if there exist three positive constants $C$, $\nu \in (0,1)$ and $\sigma
\in (0,1)$ such that for any ball $B(x,r)$ with $r\in(0,\sigma \overline{R})$ and any non-empty open subset $\Omega\subset B(x,r)$, 
\begin{equation}\tag{$\operatorname{FK}$}
\lambda(\Omega)\geq \frac{C}{W(x,r)}\left( \frac{V(x,r)}{\mu (\Omega)}%
\right) ^{\nu }.  \label{FK}
\end{equation}
\end{definition}

Clearly, \eqref{FK} implies (\hyperlink{L>}{$\operatorname{\lambda_{\geq}}$}) by taking $\Omega=B(x,r)$ in \eqref{FK}, that is
\begin{equation}
\eqref{FK} \Longrightarrow (\hyperlink{L>}{\operatorname{\lambda_{\geq}}}). \label{FK-1}
\end{equation}

We say that a Borel set $\mathcal{N}\subset M$ is \emph{properly exceptional} (see \cite[p. 153]{FOT11}) if 
 \begin{equation}
 \mu(\mathcal{N})=0 \text{ and }\mathbb{P}_{x}(X_{t}\in\mathcal{N} \text{ and }X_{t^{-}}\in\mathcal{N}\text{ for some $t>0$})=0\  \text{for all}\ x\in M\setminus \mathcal{N},
 \end{equation}
{where $X_{t^{-}}:=\lim_{s\uparrow t}X_s$}.

\begin{definition}[Mean exit time estimate]
 We say that condition $\eqref{E}$ holds if there exist $C_{\mathrm{E}}\geq1$, $\sigma \in (0,1)$ and a properly exceptional set 
 $\mathcal{N}\subset M$ such that for all $x\in M\setminus \mathcal{N}$ and $r\in (0,\sigma \overline{R })$,
\begin{equation}
C_{\mathrm{E}}^{-1}W(x,r)\leq \mathbb{E}_{x}\left[\tau_{B(x,r)}\right]\leq C_{\mathrm{E}}W(x,r).\tag{$\operatorname{E}$}  \label{E}
\end{equation}
We say that {conditions \textup{(\hypertarget{E>}{}\hyperlink{E>}{$\operatorname{\mathrm{E}_{\geq}}$})} and \textup{(\hypertarget{E<}{}\hyperlink{E<}{$\operatorname{\mathrm{E}_{\leq}}$})} hold if the lower bound and the upper bound in \eqref{E} hold\footnote{{We use the convention that mean exit times are allowed to take the value \(\infty\). Inequalities involving mean exit times are understood in this extended sense.}}, respectively.}
\end{definition}

\begin{definition}
\begin{enumerate}[label=\textup{({\arabic*})},align=right,leftmargin=*,topsep=5pt,parsep=0pt,itemsep=2pt]
\item {We say that condition \eqref{Eb} holds if there exist
$C>0$, $\sigma \in (0,1)$ and a properly exceptional set $\mathcal{N}\subset M$ such that for all $x\in M\setminus \mathcal{N}$ and $r\in (0,\sigma \overline{R })$,
\begin{equation}\label{Eb}\tag{$\operatorname{\bar{\mathrm{E}}}$}
C\sup_{y\in B\left(x,r\right)\setminus\mathcal{N}}\mathbb{E}_{y}\left[\tau_{B\left(x,r\right)}\right]
\leq\mathbb{E}_{x}\left[\tau_{B\left(x,r\right)}\right].
\end{equation} }
\item We say that condition \eqref{EM} holds if there exist $C>0$, $A>1$, $\sigma \in (0,1)$ and a properly exceptional set $\mathcal{N}\subset M$ such that for all $x\in M\setminus \mathcal{N}$ and $r\in (0,\sigma \overline{R })$,
\begin{equation}\label{EM}\tag{$\operatorname{\mathrm{E_M}}$}
C\sup_{y\in B\left(x,r\right)\setminus\mathcal{N}}\mathbb{E}_{y}\left[\tau_{B\left(x,r\right)}\right]
\leq\mathbb{E}_{x}\left[\tau_{B\left(x,Ar\right)}\right].
\end{equation}
\end{enumerate}
 \end{definition}

The  {Harnack-type inequality} \eqref{Eb} on mean exit time is introduced by Mariano and Wang \cite{MW25}. The condition \eqref{EM} we introduced in this paper is slightly different from \eqref{Eb}. The benefits of this modification will be discussed in Section \ref{s.DisEm}. {Conditions similar to \eqref{EM} were also studied by Telcs \cite{Tel06} in the setting of graphs, which are called (TD) and (TC) therein.}

\begin{definition}
We say that condition \eqref{J<} holds if the jump kernel exists and there exists a constant $C>0$ such that for any $x\in M$, \begin{equation}\label{J<}\tag{$\operatorname{\mathrm{J_{\leq}}}$}
J(x,y)\leq \frac{C}{V(x,d(x,y)){W(x,d(x,y))}},\, \text{for $\mu$-a.e. $y\in M$}.
\end{equation}
\end{definition}

The {first} main result in this paper is to characterize the heat kernel upper bound \eqref{UE} using \eqref{EM} .

\begin{theorem}\label{t.main}
Let $(\mathcal{E},\mathcal{F})$ be a regular Dirichlet form without killing part. Assume that condition \eqref{VD} holds, we have 
\begin{align}\label{eq:equ}
 \eqref{UE}+\eqref{C}+\eqref{RVD}&\Longleftrightarrow
 \eqref{FK}+\textup{(\hyperlink{L<}{$\operatorname{\lambda_{\leq}}$})}+\eqref{EM}+\eqref{J<}\\
 &\Longleftrightarrow {\eqref{FK}+\textup{(\hyperlink{L<}{$\operatorname{\lambda_{\leq}}$})}+\eqref{Eb}+\eqref{J<}}. \label{eq:eq+}
\end{align}
\end{theorem}

The {second} main result in this paper is to obtain an upper bound for survival probability through {the bottom of the spectrum}, given \eqref{VD} and \eqref{UE}.
\begin{theorem}
\label{thm:5} Let $(\mathcal{E}, \mathcal{F})$ be a regular Dirichlet form
on $L^{2}(M, \mu)$ without killing part. Assume that conditions \eqref{VD} and \eqref{UE} hold. Let $\kappa:=2\beta _{1}({\alpha +2\beta _{1}})^{-1}\in(0,1)$, where $\alpha$ comes from \eqref{e.VDip} and $\beta_1$ comes from \eqref{eq:vol_0}.
 Then there exist a properly exceptional set $\mathcal{N} \subset M$
and a constant $C>0$ such that for any $t>0$ and any open set $D
\subset M$, we have 
\begin{equation}
\sup_{x \in D \setminus \mathcal{N}} \mathbb{P}_x(\tau_D > t) \leq C\exp\left(-\frac{1}{2}\kappa t\lambda (D)\right).  \label{eq:th5}
\end{equation}
\end{theorem}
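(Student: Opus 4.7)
The plan is to follow the strategy of \cite[Theorem 3.1]{MW25}, replacing the sub-Gaussian heat kernel input by the stable-like bound \eqref{UE}; the weaker exponent $\kappa$ reflects the cost of trading the Gaussian tail for a polynomial one. First, using property \ref{it.HK7} applied to the part form $(\mathcal{E},\mathcal{F}(D))$, I obtain a properly exceptional set $\mathcal{N}$ outside of which
\[
\mathbb{P}_x(\tau_D>t)=\int_{M} p_t^D(x,y)\,\mu(\dif y)
\]
holds pointwise, where $p_t^D\leq p_t$; it then suffices to estimate this integral.

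Now fix $x\in D\setminus\mathcal{N}$ and a radius $R>0$ to be chosen as a function of $t$ and $\lambda(D)$. Split
\[
\mathbb{P}_x(\tau_D>t)
= \int_{B(x,R)}p_t^D(x,y)\,\mu(\dif y) + \int_{M\setminus B(x,R)}p_t^D(x,y)\,\mu(\dif y)
=: I+II.
\]
For $II$, the off-diagonal half of \eqref{UE} gives $p_t(x,y)\leq Ct/(V(x,d(x,y))W(x,y))$; a dyadic annular decomposition of $M\setminus B(x,R)$, using \eqref{VD} on each annulus and the geometric decay in $k$ with ratio $2^{-\beta_1}$ provided by \eqref{eq:vol_0}, yields $II\leq Ct/W(x,R)$. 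For $I$, rewrite with the semigroup property and apply Cauchy--Schwarz:
\[
I = \int p_{t/2}^D(x,z)\bigl(P_{t/2}^D \one_{D\cap B(x,R)}\bigr)(z)\,\mu(\dif z)
\leq \lVert p_{t/2}^D(x,\cdot)\rVert_{2}\cdot\lVert P_{t/2}^D \one_{D\cap B(x,R)}\rVert_{2}.
\]
The first factor equals $\sqrt{p_t^D(x,x)}\leq\sqrt{C_{\mathrm{UE}}/V(x,W^{-1}(x,t))}$ by the on-diagonal part of \eqref{UE} together with $p_t^D\leq p_t$, while the second is at most $e^{-\lambda(D)t/2}V(x,R)^{1/2}$ by the spectral gap for $(\mathcal{E},\mathcal{F}(D))$. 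Therefore
\[
I\leq C\bigl(V(x,R)/V(x,W^{-1}(x,t))\bigr)^{1/2}e^{-\lambda(D)t/2}.
\]

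It remains to optimize in $R$. I choose $R$ so that $W(x,R)=t\exp(\kappa t\lambda(D)/2)$, which makes $II\leq e^{-\kappa t\lambda(D)/2}$. By \eqref{w-1}, this $R$ satisfies $R/W^{-1}(x,t)\leq C\exp(\kappa t\lambda(D)/(2\beta_1))$, and then \eqref{e.VDip} gives
\[
\bigl(V(x,R)/V(x,W^{-1}(x,t))\bigr)^{1/2}\leq C\exp\bigl(\tfrac{\alpha\kappa}{4\beta_1}\,t\lambda(D)\bigr).
\]
The algebraic identity $\kappa=2\beta_1/(\alpha+2\beta_1)$ is precisely equivalent to $\alpha\kappa/(4\beta_1)-1/2=-\kappa/2$, so the exponent for $I$ collapses to $-\kappa t\lambda(D)/2$, giving $I+II\leq Ce^{-\kappa t\lambda(D)/2}$. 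All constants depend only on the structural parameters $C_{\mathrm{VD}},C_{\mathrm{UE}},\alpha,\beta_1$, so the bound is uniform in $x\in D\setminus\mathcal{N}$.

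The main obstacle I anticipate is twofold. On the analytic side, all estimates must be genuinely pointwise rather than $\mu$-a.e., so I will construct a single properly exceptional set $\mathcal{N}$ that serves all rational $t>0$ simultaneously via \ref{it.HK7} and extend to general $t$ by monotonicity of $t\mapsto\mathbb{P}_x(\tau_D>t)$. On the technical side, when $\overline{R}<\infty$ and $t\geq W(x,\overline{R})$, the inequality \eqref{UE} is not directly available; this regime I will handle separately, using $\mu(M)<\infty$ in a direct $L^2$ argument (where the inequality $e^{-\lambda(D)t/2}\leq e^{-\kappa t\lambda(D)/2}$ is easier still, since $\kappa<1$).
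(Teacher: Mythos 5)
Your core computation is correct, and the route is genuinely different from the paper's. The paper (Theorem \ref{thm4}) interpolates in the integrand: it writes $\int_D p_t^D=\int_D (p_t^D)^{\epsilon}(p_t^D)^{1-\epsilon}$, applies H\"older with exponents $2/\epsilon$ and $2/(2-\epsilon)$, feeds the spectral decay into the $L^2$ factor via \cite[Lemma 3.3]{MW25}, and shows the complementary factor is bounded by an absolute constant (after its own near/far splitting at radius $W^{-1}(x,t)$); choosing $\epsilon<\kappa$ and $\delta$ suitably yields the exponent $\tfrac12\kappa$. You instead split the spatial domain at a radius $R$ and optimize $R$ as a function of $t\lambda(D)$: the identity $\alpha\kappa/(4\beta_1)-\tfrac12=-\tfrac12\kappa$ is exactly right, the dyadic tail estimate $II\le Ct/W(x,R)$ is the standard computation under \eqref{VD} and \eqref{eq:vol_0}, and the Cauchy--Schwarz step for $I$ is sound. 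Both arguments give the same exponent; yours makes more transparent where $\kappa$ comes from (balancing the polynomial off-diagonal tail against the spectral decay), at the price of a $D$- and $t$-dependent choice of $R$.

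Two points need shoring up. First, the exceptional set produced by \ref{it.HK7} for the part form depends on both $t$ and $D$; you handle the $t$-dependence (rationals plus monotonicity of $t\mapsto\mathbb{P}_x(\tau_D>t)$), but the theorem asks for a single $\mathcal{N}$ valid for \emph{all} open $D$, of which there are uncountably many. The standard fix, used in the paper, is to run the argument for $D$ in a countable base, take the countable union of exceptional sets, and pass to general $D$ by exhaustion together with $\lambda(D_k)\ge\lambda(D)$. Second, the bounded case is thinner than you suggest: for $t\ge W(x,\overline{R})$ your ``direct $L^2$ argument'' still needs an on-diagonal bound $p_t^D(x,x)\le p_t(x,x)\le C/V(x,\overline{R})$ at large times, which is \emph{not} part of \eqref{UE} and must be manufactured from the semigroup property (this is the paper's Lemma \ref{l.UE+}). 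Relatedly, the off-diagonal half of \eqref{UE} is stated only for $t<W(x,\overline{R})\wedge W(y,\overline{R})$, so your estimate of $II$ invokes it at points $y$ where it may not literally apply; this is harmless since $W(y,\overline{R})\asymp W(x,\overline{R})$ by \eqref{eq:vol_0}, but it should be said. Finally, the Cauchy--Schwarz step presupposes existence and square-integrability of $p_{t/2}^D(x,\cdot)$ (the paper's Lemma \ref{L3} and Proposition \ref{prop1}); cite it rather than assume it.
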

\begin{remark}
	{
We remark that a similar result was obtained in \cite[Theorem 2.2]{MW25+} with a constant of $\beta _{1}({\alpha +4\beta _{1}})^{-1}$ in the exponent by using different methods. Note that the constant of $\kappa/2=\beta _{1}({\alpha +2\beta _{1}})^{-1}$ in the exponent appearing in Theorem \ref{thm:5} is sharper than $\beta _{1}({\alpha +4\beta _{1}})^{-1}$. 
}
\end{remark}
\begin{remark}
{
The lower bound of the survival probability is a general fact, and is given in \cite[Proposition 4.6]{MW25} as follows. For any regular Dirichlet form $(\mathcal{E}, \mathcal{F})$ on $L^{2}(M, \mu)$, we have
\begin{equation*}
\esup_{x \in D } \mathbb{P}_x(\tau_D > t) \geq \exp\left(- \lambda (D)t\right).
\end{equation*}
}
\end{remark}
As a consequence of Theorem \ref{thm:5}, we see that there is an inverse proportionality between the mean exit time and {the bottom of the spectrum}.
\begin{corollary}[\text{\cite[Proposition 4.1]{MW25}}]Under the same assumptions as Theorem \ref{thm:5}, we have 
\begin{enumerate}[label=\textup{({\arabic*})},align=right,leftmargin=*,topsep=5pt,parsep=0pt,itemsep=2pt]
\item For any open set $D\subset M$,
\begin{equation}
\lambda\left(D\right)>0\text{ if and only if }\esup_{x\in D}\mathbb{E}_x\left[\tau_{D}\right]<\infty.
\end{equation}

\item For any $p>0,$ there exists a constant $C_{p}\geq\Gamma\left(p+1\right)$
such that for every open set $D\subset M$ satisfying $\lambda\left(D\right)>0$,
we have
\begin{equation}
\Gamma\left(p+1\right)\leq\lambda\left(D\right)^{p}\cdot\esup_{x\in D}\mathbb{E}_{x}\left[\tau_{D}^{p}\right]\leq C_{p},   \label{e.E1}
\end{equation}
{where $\Gamma(p+1):=\int_0^\infty t^{p}e^{-t}\mathrm{d}t$ is the Gamma function.}

\item For any open set $D\subset M$, if $\esup_{x\in D}\mathbb{E}_{x}\left[e^{a\tau_{D}}\right]<\infty$ {for some $a>0$}
then
\begin{equation}
\lambda\left(D\right)>a+\dfrac{a}{\esup_{x\in D}\mathbb{E}_{x}\left[e^{a\tau_{D}}\right]-1}.
\end{equation}
\end{enumerate}
\end{corollary}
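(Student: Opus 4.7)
The plan is to derive all three parts by combining the exponential survival bound of Theorem~\ref{thm:5} with standard spectral identities for the Dirichlet semigroup $\{P_t^D\}$ on $L^2(D,\mu)$. Throughout, set $M_p := \esup_{x\in D}\mathbb{E}_x[\tau_D^p]$, $K_a := \esup_{x\in D}\mathbb{E}_x[e^{a\tau_D}]$, $\lambda := \lambda(D)$, and for $a<\lambda$ write $G_a^D f(x) := \int_0^\infty e^{at} P_t^D f(x)\dif t$, so that $G_a^D\one_D(x) = (\mathbb{E}_x[e^{a\tau_D}]-1)/a$ and $G_0^D\one_D(x) = \mathbb{E}_x[\tau_D]$.

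For part~(2), the upper bound is immediate from Theorem~\ref{thm:5}: expanding $\mathbb{E}_x[\tau_D^p] = p\int_0^\infty t^{p-1}\mathbb{P}_x(\tau_D>t)\dif t$ and plugging in $\mathbb{P}_x(\tau_D>t)\leq C\exp(-\kappa\lambda t/2)$ yields $M_p\leq C(2/\kappa)^p\Gamma(p+1)\lambda^{-p}$, so $C_p := C(2/\kappa)^p\Gamma(p+1)$ works. For the lower bound, I would use the iterated identity $\mathbb{E}_x[\tau_D^p] = \Gamma(p+1)(G_0^D)^p\one_D(x)$ (via the Laplace representation of $\tau_D^p$) and pair against a nonnegative principal eigenfunction $\phi_1\in L^1(D)\cap L^2(D)$ satisfying $P_t^D\phi_1 = e^{-\lambda t}\phi_1$ and $\int_D\phi_1\dif\mu>0$. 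Self-adjointness of $P_t^D$ then gives $\langle(G_0^D)^p\one_D,\phi_1\rangle = \lambda^{-p}\langle\one_D,\phi_1\rangle$, and the $L^\infty$--$L^1$ bound $\langle(G_0^D)^p\one_D,\phi_1\rangle\leq\|(G_0^D)^p\one_D\|_\infty\|\phi_1\|_1$ produces $\Gamma(p+1)\leq\lambda^p M_p$.

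Part~(3) runs the same pairing at resolvent level: from $G_a^D\phi_1 = (\lambda-a)^{-1}\phi_1$ and $G_a^D\one_D(x) = (\mathbb{E}_x[e^{a\tau_D}]-1)/a$, the identity $\langle G_a^D\one_D,\phi_1\rangle = (\lambda-a)^{-1}\langle\one_D,\phi_1\rangle$ together with the $L^\infty$--$L^1$ bound yields $K_a\geq 1+a/(\lambda-a)$, which rearranges to $\lambda\geq a+a/(K_a-1)$; the strict inequality in~(3) follows by noting that $K_a<\infty$ forces $a<\lambda$ strictly (otherwise $G_a^D$ is not defined and the integral diverges) together with a non-degeneracy argument showing that $G_a^D\one_D$ cannot be a.e.\ constant on $\{\phi_1>0\}$, making the $L^\infty$--$L^1$ bound strict. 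For part~(1), the direction $\lambda>0\Rightarrow M_1<\infty$ is~(2) with $p=1$; conversely, if $M_1<\infty$, the Markov inequality gives $\esup_y\mathbb{P}_y(\tau_D>s)\leq M_1/s$, and applying the strong Markov property at time $t$ yields $\mathbb{P}_x(\tau_D>t+s)\leq (M_1/s)\mathbb{P}_x(\tau_D>t)$. Choosing $s=2M_1$ halves the survival probability on every interval of length $2M_1$, so $\mathbb{E}_x[e^{a\tau_D}]<\infty$ for some $a>0$, and part~(3) then delivers $\lambda>a>0$.

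The main technical obstacle is the existence of a suitable $\phi_1$ for an arbitrary open $D$ with $\lambda(D)>0$, since $D$ may have infinite measure or the generator of $(\mathcal{E},\mathcal{F}(D))$ need not have discrete spectrum. I would address this by approximation: take bounded open subdomains $D_n\uparrow D$ with $\mu(D_n)<\infty$, on each of which \eqref{VD} and \eqref{UE} imply that $P_t^{D_n}$ is Hilbert--Schmidt with purely discrete spectrum and a nonnegative principal eigenfunction $\phi_1^{D_n}$. Since $\lambda(D_n)\downarrow\lambda(D)$ by monotonicity of Rayleigh quotients under exhaustion and $\mathbb{E}_x[\tau_{D_n}^p]\uparrow\mathbb{E}_x[\tau_D^p]$ by monotone convergence of exit times, the inequalities in~(2) and~(3) pass to the limit.
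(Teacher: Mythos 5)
Your overall strategy is sound, and the paper itself offers no proof to compare against (it simply cites \cite[Proposition 4.1]{MW25}), so I judge the proposal on its own terms. Parts (1) and (2) go through: the upper bound in (2) is exactly the integration of Theorem \ref{thm:5} against $p\,t^{p-1}\dif t$; the lower bound via pairing $\int_0^\infty t^{p-1}P_t^{D_n}\one_{D_n}\dif t$ with a nonnegative principal eigenfunction on an exhaustion $D_n\uparrow D$ works (compactness of $P_t^{D_n}$ does follow from \eqref{VD}+\eqref{UE} plus $\mu(D_n)<\infty$, and $\lambda(D_n)\downarrow\lambda(D)$ uses regularity of the form); and the Khasminskii-type iteration in (1) is standard. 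I would note, though, that the eigenfunction and exhaustion machinery is avoidable: since $G_a:=\int_0^\infty e^{at}P_t^D\dif t$ is symmetric and satisfies $\|G_a\|_{L^\infty\to L^\infty}=\esup_x(\mathbb{E}_x[e^{a\tau_D}]-1)/a$, duality and Riesz--Thorin give $\frac{1}{\lambda-a}=\|G_a\|_{L^2\to L^2}\le\|G_a\|_{L^\infty\to L^\infty}$ directly on $D$ (and similarly $\Gamma(p)\lambda^{-p}=\|\int_0^\infty t^{p-1}P_t^D\dif t\|_{2\to2}\le\esup_x\mathbb{E}_x[\tau_D^p]/p$ for (2)); this is precisely the mechanism behind \cite[Lemma 6.2]{GH14}, which the paper already invokes in Lemma \ref{L12}.

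The genuine gap is the \emph{strict} inequality in part (3). Your pairing argument yields only $\lambda(D_n)-a\ge a/(K_a-1)$ for each $n$, and passing to the limit $\lambda(D_n)\downarrow\lambda(D)$ can only produce $\lambda(D)\ge a+a/(K_a-1)$: even if each approximating inequality were strict, strictness is not preserved under the limit. Moreover, the proposed "non-degeneracy argument showing that $G_a^D\one_D$ cannot be a.e.\ constant on $\{\phi_1>0\}$" is not substantiated and cannot be true in full generality: if $\one_D$ itself is (proportional to) the ground state of the part process — e.g.\ a two-point Dirichlet set in a symmetric finite chain, where $L^D\one_D=\lambda\one_D$ — then $\mathbb{E}_x[e^{a\tau_D}]=1+a/(\lambda-a)$ is constant in $x$ and one gets exactly $\lambda=a+a/(K_a-1)$. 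So strictness requires either an additional structural input (irreducibility plus an argument that $x\mapsto\mathbb{E}_x[e^{a\tau_D}]$ is non-constant on the support of the ground-state measure, which you would still have to carry through the exhaustion) or must be conceded as non-strict. You should either supply that argument on $D$ itself (not on $D_n$) or prove only the inequality with $\ge$; as stated, this step fails.
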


\begin{remark}
{In the non-local setting of $\alpha$-stable processes, Giorgi and Smits proved \eqref{e.E1} with $p=1$ in \cite[Theorem 3.1]{GS10}, and Panzo showed the upper bound $C_p$ explicitly with $p=1$ in \cite[Theorem 2.1]{Pan22}. Also, Franzina proved \eqref{e.E1} with $p=1$ for the $(s, p)$-laplacian in \cite[(1.5) in Theorem 1.1]{Fra19}. In the setting of metric measure spaces, the lower bound of \eqref{e.E1} was given in \cite[Lemma 6.2]{GH14} without using \eqref{VD} and \eqref{UE}, that is, }
\begin{equation}\label{e.Gr}
{1\leq \lambda\left(D\right)\cdot\esup_{x\in D}\mathbb{E}_{x}\left[\tau_{D}\right].}
\end{equation}
\end{remark}

\section{Survival probability upper bounds}
\label{Sect-3}

To derive an upper bound estimate for the large time survival probability, we need to study the long-time behavior of the heat kernel. If $(M,d)$ is unbounded, then {by \eqref{e.w},} $W(x,\overline{R})=\infty$ for all $x\in M$, and the goal is already achieved by the expression of \eqref{UE} (or \eqref{DUE}). However, if $(M,d)$ is bounded, condition \eqref{UE} fails to provide long-time upper bounds for the heat kernel: it merely provides estimates for time less than $\inf_{x\in M}W(x,\overline{R})$. Nevertheless, in this case, the long-time upper bounds for the heat kernel can also be derived by employing the semigroup property \ref{it.HK6}. 
 
Write $T_{0}:=\inf_{x\in M}W(x,\overline{R})$, then we have the following.

\begin{lemma}\label{l.UE+}
Assume that conditions \eqref{VD} and \eqref{DUE} hold. If $T_{0}<\infty$, then there exists a constant $C>0$ {depending} only on $C_{\mathrm{DUE}}$ and $C_{\mathrm{VD}}$ such that 
\begin{equation}\label{e.UE+}
p_{t}(x,y)\leq \frac{C}{V(x,\overline{R})},\ \text{for all}\ x,y\in M\ \text{and}\ t\geq T_0.
\end{equation}
\end{lemma}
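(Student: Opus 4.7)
My plan is to deduce the diagonal estimate first and then pass to off-diagonal via Cauchy--Schwarz, circumventing the time restriction $t<W(x,\overline{R})$ in \eqref{DUE} by anchoring at a fixed time just below $T_{0}$ and invoking monotonicity of the on-diagonal heat kernel in $t$.

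First, I would establish the time-monotonicity $p_{2t}(x,x)\leq p_{2s}(x,x)$ for $0<s<t$. By the semigroup property \ref{it.HK6} together with symmetry \ref{it.HK5}, one has $p_{t}(x,\cdot)=P_{t-s}p_{s}(x,\cdot)$ in $L^{2}(M,\mu)$; combined with the $L^{2}$-contractivity of $\{P_{u}\}_{u\geq0}$, this gives
\[
p_{2t}(x,x)=\|p_{t}(x,\cdot)\|_{L^{2}}^{2}=\|P_{t-s}p_{s}(x,\cdot)\|_{L^{2}}^{2}\leq\|p_{s}(x,\cdot)\|_{L^{2}}^{2}=p_{2s}(x,x).
\]

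Next, I would exploit the uniform comparability of $W(\cdot,\overline{R})$ across $M$: applying \eqref{eq:vol_0} with $r=R=\overline{R}$ and the fact that $d(x,y)\leq\overline{R}$ yields $T_{0}\leq W(x,\overline{R})\leq CT_{0}$ for every $x\in M$. Setting $s_{0}:=T_{0}/2$, one has $s_{0}<T_{0}\leq W(x,\overline{R})$ and $s_{0}<t$ for every $t>T_{0}$, so \eqref{DUE} is available at time $s_{0}$ and monotonicity gives
\[
p_{t}(x,x)\leq p_{s_{0}}(x,x)\leq\frac{C_{\mathrm{DUE}}}{V(x,W^{-1}(x,s_{0}))}.
\]
Since $s_{0}\geq W(x,\overline{R})/(2C)$, the inverse inequality \eqref{w-1} produces $W^{-1}(x,s_{0})\geq c\,\overline{R}$ for some $c\in(0,1)$ depending only on the scale-function data, and iterating \eqref{VD} a bounded number of times yields $V(x,\overline{R})\leq C_{1}V(x,W^{-1}(x,s_{0}))$. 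Combining these gives the diagonal bound $p_{t}(x,x)\leq C_{2}/V(x,\overline{R})$ for all $t>T_{0}$.

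Finally, Cauchy--Schwarz applied to
\[
p_{t}(x,y)=\int_{M}p_{t/2}(x,z)\,p_{t/2}(y,z)\,\mu(\dif z)
\]
delivers $p_{t}(x,y)\leq\sqrt{p_{t}(x,x)\,p_{t}(y,y)}$, and since $d(x,y)\leq\overline{R}$, the consequence \eqref{e.VDip} of \eqref{VD} bounds $V(y,\overline{R})$ below by a fixed multiple of $V(x,\overline{R})$, from which the claimed off-diagonal inequality follows. No step here is a genuine obstacle: the argument is essentially bookkeeping to route around the restriction $t<W(x,\overline{R})$ built into \eqref{DUE}, and the only mild nuisance is tracking the constants through \eqref{w-1} and through the finite iteration of \eqref{VD}.
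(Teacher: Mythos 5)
Your argument is correct, but it routes around the time restriction in \eqref{DUE} differently from the paper. The paper also splits time at $T_{0}/2$, but then tests $\int_{E}p_{t}(x,y)\mu(\dif y)$ against an arbitrary measurable set $E$, inserts the two-sided form \eqref{e.DUE+} for the factor $p_{T_{0}/2}$, disposes of the factor $p_{t-T_{0}/2}$ via the sub-Markov property \ref{it.HK4}, and finally upgrades the resulting essential-supremum bound to a genuinely pointwise one using \ref{it.HK1} and the quasi-continuity \ref{it.HK2}. You instead prove monotonicity of $t\mapsto p_{2t}(x,x)$ via $L^{2}$-contractivity, apply \eqref{DUE} at the anchor time $T_{0}/2$, and pass to the off-diagonal by Cauchy--Schwarz through \ref{it.HK6}; since \ref{it.HK6} holds for \emph{all} $x,y$, this yields the pointwise bound directly and avoids the quasi-continuity upgrade entirely, at the modest cost of the extra monotonicity lemma (for which you correctly need $p_{T_{0}/4}(x,\cdot)\in L^{2}$, guaranteed by \eqref{DUE} at time $T_{0}/2<W(x,\overline{R})$). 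Both proofs use the same two geometric inputs --- the uniform comparability $T_{0}\le W(x,\overline{R})\le CT_{0}$ from \eqref{eq:vol_0} and the conversion of $V(\cdot,W^{-1}(\cdot,T_{0}/2))$ into $V(x,\overline{R})$ via \eqref{w-1} and \eqref{VD}/\eqref{e.VDip} --- so, like the paper's, your constant also depends on the scale-function constants in \eqref{eq:vol_0}, not only on $C_{\mathrm{DUE}}$ and $C_{\mathrm{VD}}$ as the lemma nominally states; this imprecision is already present in the paper and is not a defect of your argument.
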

\begin{proof}
Recall that $T_{0}<\infty$ implies $\overline{R}<\infty$. By \eqref{eq:vol_0}, we know that there exists $C>1$ such that for all $x\in M$ 
\begin{equation}
  T_0\leq W(x,\overline{R})\leq CT_0. \label{T0}
\end{equation}
By \eqref{e.VDip} and \eqref{T0}, for all $x,y\in M$ and all $s\in (0,T_0]$, we have
\begin{align}
\frac{V(x, \overline{R})}{V(y, W^{-1}(y,s))}&  \leq C_{\mathrm{VD}}\left( \frac{d(x,y)+\overline{R}}{ W^{-1}(y,s)}\right)^{\alpha}
\leq C_{\mathrm{VD}}^2 \left(\frac{W^{-1}(y,W(y,\overline{R}))}{W^{-1}(y,s)}\right)^\alpha \\
&\leq  C_{\mathrm{VD}}^2 C^{\frac{\alpha}{\beta_1}}\left(\frac{W(y,\overline{R})}{s}\right)^{\frac{\alpha}{\beta_1}}
\leq C_{\mathrm{VD}}^2 C^{\frac{2\alpha}{\beta_1}} \left(\frac{T_0}{s}\right)^{\frac{\alpha}{\beta_1}}.  \label{p-1}
\end{align}

For any measurable subset $E\subset M$, we have, by Fubini's theorem and the symmetry of heat kernel, that for any 
$x\in M$ and all $t\geq T_0$,
\begin{align}
&\phantom{\ \ \ {\leq}}\int_{E}p_{t}(x,y)\mu(\dif y)\overset{\ref{it.HK6}}{=}\int_{M}p_{t-T_0/2}(x,z)\left(\int_{E}p_{T_0/2}(y,z)\mu(\dif y)\right)\mu(\dif z)\\
&\overset{\eqref{e.DUE+}}{\leq} \int_{M}p_{t-T_0/2}(x,z)\left(\int_{E}\frac{C_{\mathrm{DUE}}}{V(y,W^{-1}(y,T_0/2))^{1/2}V(z,W^{-1}(z,T_0/2))^{1/2}}\mu(\dif y)\right)\mu(\dif z)\\
&\overset{\eqref{p-1}}{\leq}\int_{M}p_{t-T_0/2}(x,z)\left(\int_{E}\frac{C_{\mathrm{DUE}}C_{\mathrm{VD}}^2 C^{\frac{2\alpha}{\beta_1}}2^{\frac{\alpha}{\beta_1}}}{V(x, \overline{R})}\mu(\dif y)\right)\mu(\dif z)\\
&\overset{\ref{it.HK4}}{\leq} \int_{E}\frac{C_{\mathrm{DUE}}C_{\mathrm{VD}}^2 C^{\frac{2\alpha}{\beta_1}}2^{\frac{\alpha}{\beta_1}}}{V(x, \overline{R})}\mu(\dif y).\label{e.UE+2}
\end{align}
Since $E$ is arbitrary, \eqref{e.UE+2} implies \[\esup_{y\in M} p_{t}(x,y)\leq \frac{C_{\mathrm{DUE}}C_{\mathrm{VD}}^2 C^{\frac{2\alpha}{\beta_1}}2^{\frac{\alpha}{\beta_1}}}{V(x, \overline{R})}.\]
By \ref{it.HK1} and the quasi-continuity of heat kernel in \ref{it.HK2}, we obtain \eqref{e.UE+}.
\end{proof}

The following lemma indicates that condition \eqref{DUE} {ensures that the heat kernel $p^{D}_t(x,y)$ of Dirichlet form $(\mathcal{E},\mathcal{F}(D))$ exists} for any open set $D\subset M$.
\begin{proposition}
[\text{\cite[Corollary 6.9]{GHH24b}}]
\label{p.L3} If conditions \eqref{VD} and \eqref{DUE} hold, then {a} heat kernel $p_t(x,y)$ exists, and for any non-empty open set $D\subset M$, {a} heat kernel $p^{D}_t(x,y)$ of Dirichlet form $(\mathcal{E},\mathcal{F}(D))$ exists. Moreover, for all $x,y \in M$ and $t>0$,
\begin{equation}\label{e.L3}
p^{D}_t(x,y) \leq p_t(x,y).
\end{equation}
\end{proposition}

\begin{corollary}\label{c.prop1}
If conditions \eqref{VD} and \eqref{DUE} hold, then for any open set $D\subset M$, 
\begin{equation}
  p^D_t(\cdot,x)\in L^2(D,\mu),\ \text{for all }(t,x)\in (0,\infty)\times D.\label{e.p341}
\end{equation}
\end{corollary}
\begin{proof}
By {Proposition} \ref{p.L3} we know the heat kernel $p^{D}_{t}(x,y)$ exists. Then by \eqref{DUE} and \eqref{e.UE+} in Lemma \ref{l.UE+}, we have
\begin{equation*}
\Vert p^{D}_t(\cdot ,x)\Vert _{L^{2}(D,\mu)}^{2}\overset{\eqref{e.L3}}{\leq} \Vert p_t(\cdot ,x)\Vert
_{L^{2}(M,\mu)}^{2}\overset{\ref{it.HK6}}{=}p_{2t}(x,x)\leq 	\begin{dcases}
		\frac{C_{\mathrm{DUE}}}{V(x, W^{-1}(x,2t))}\  & \text{if}\ 2 t< T_0, \\ 
		\frac{C}{V(x,\overline{R})}\  & \text{if}\ 2 t\geq T_0,
	\end{dcases}
\end{equation*}
which gives \eqref{e.p341}.
\end{proof}

\begin{theorem}
\label{thm4} 
Let $\kappa:=2\beta _{1}({\alpha +2\beta _{1}})^{-1}\in(0,1)$, {where $\alpha$ and $\beta_{1}$ are constants appearing in \eqref{e.VDip} and \eqref{eq:vol_0}, respectively}. Let $\epsilon \in (0,\kappa)$, and $\delta \in (0,1)$. If conditions \eqref{VD}, \eqref{UE} hold, then there exists a constant $C=C_{\epsilon ,\delta}>0$ depending only on $\epsilon$, $\delta$ and the constants {appearing} in \eqref{VD} and \eqref{UE} such that, for any open set $D\subset M$,
\begin{equation}
\int_{D}{p^D_{t}}(x,y)\mu(\dif y)\leq C \exp (-\epsilon (1-\delta )t\lambda (D)),\ \text{{for all $(t,x)\in(0,\infty)\times D$}}.\label{eq:p4}
\end{equation}
\end{theorem}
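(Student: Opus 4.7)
My plan is to pit the $L^{2}$-spectral decay $\|P^{D}_{s}\|_{L^{2}\to L^{2}}\leq e^{-s\lambda(D)}$ of the Dirichlet semigroup against the on-diagonal bound $p_{2r}(x,x)\leq C/V(x,W^{-1}(x,2r))$ coming from \eqref{UE} (i.e.\ \eqref{DUE} via Lemma~\ref{L3}), in a Davies-style time-splitting. Since $\mathbf{1}_{D}$ need not belong to $L^{2}(D,\mu)$ when $\mu(D)=\infty$, I will first localise in space to a ball $B(x,R)$, with the tail controlled by the off-diagonal part of \eqref{UE}.

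Fix $x\in D$, $t>0$, and parameters $R>0$, $s\in(0,t)$, and set $r:=t-s$. Write
\begin{equation*}
\int_{D}p_{t}(x,y)\mu(\dif y)=I_{1}(R)+I_{2}(R),
\end{equation*}
where $I_{1}(R)$ is the integral over $D\cap B(x,R)$ and $I_{2}(R)$ the integral over $D\setminus B(x,R)$. For the tail $I_{2}(R)$, I plug the off-diagonal estimate $p_{t}(x,y)\leq Ct/(V(x,d(x,y))W(x,y))$ from \eqref{UE} into the dyadic annular decomposition $M\setminus B(x,R)=\bigsqcup_{k\geq 0}A_{k}$ with $A_{k}:=B(x,2^{k+1}R)\setminus B(x,2^{k}R)$; by \eqref{VD} and the doubling of $W$ in \eqref{eq:vol_0}, $\int_{A_{k}}p_{t}(x,y)\mu(\dif y)\leq Ct/W(x,2^{k}R)$, and geometric summation yields $I_{2}(R)\leq Ct/W(x,R)$.

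For the main piece $I_{1}(R)$ I use Lemma~\ref{L3} together with the strong Markov decomposition $p_{t}(x,y)=p^{D}_{t}(x,y)+\mathbb{E}_{x}[p_{t-\tau_{D}}(X_{\tau_{D}},y);\tau_{D}<t]$ for $x,y\in D$ to reduce (modulo a careful estimate of the exit--return contribution) to bounding $\int_{D\cap B(x,R)}p^{D}_{t}(x,y)\mu(\dif y)$; the latter is estimated by Cauchy--Schwarz on the semigroup identity $p^{D}_{t}=\int p^{D}_{r}(\cdot,z)p^{D}_{s}(z,\cdot)\mu(\dif z)$ and by the spectral gap:
\begin{equation*}
\int_{D\cap B(x,R)}p^{D}_{t}(x,y)\mu(\dif y)\leq \|p^{D}_{r}(x,\cdot)\|_{L^{2}(D)}\cdot\|P^{D}_{s}\mathbf{1}_{D\cap B(x,R)}\|_{L^{2}(D)}\leq C\Bigl(\frac{R}{W^{-1}(x,2r)}\Bigr)^{\alpha/2}\!e^{-s\lambda(D)},
\end{equation*}
after using \eqref{DUE} and Lemma~\ref{L3} on $\|p^{D}_{r}(x,\cdot)\|^{2}_{L^{2}}=p^{D}_{2r}(x,x)\leq p_{2r}(x,x)$, and \eqref{e.VDip} on the ratio $V(x,R)/V(x,W^{-1}(x,2r))$.

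Finally I optimise. Setting $s=\epsilon(1-\delta)t$ and $r=t-s$, the remaining freedom in $R$ is used to equilibrate the two terms $(R/W^{-1}(x,2r))^{\alpha/2}e^{-s\lambda(D)}$ and $t/W(x,R)$. Using \eqref{eq:vol_0} to translate between $W(x,R)$ and $R$, this choice produces only a polynomial-in-$(t\lambda(D))$ prefactor, which is absorbed into the exponential precisely when $\epsilon<\kappa=2\beta_{1}/(\alpha+2\beta_{1})$; this threshold is exactly the balance point between the volume-doubling exponent $\alpha$ and twice the lower scaling exponent $\beta_{1}$ of $W$. The main obstacle I expect is the strong-Markov step that reduces $\int_{D}p_{t}$ to $\int_{D}p^{D}_{t}$ with a controllable remainder: the spectral gap can only enter through the Dirichlet semigroup $P^{D}_{s}$, so one has to argue that the ``exit-and-return'' paths contribute an error that is already absorbed by the tail bound $Ct/W(x,R)$ and thus does not spoil the exponential decay.
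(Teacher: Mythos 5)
The central difficulty you flag --- passing from $\int_D p_t$ to $\int_D p^D_t$ --- is not a technical remainder to be absorbed; it is fatal to the statement as you read it. The exit-and-return term $\mathbb{E}_x[p_{t-\tau_D}(X_{\tau_D},y);\tau_D<t]$ dominates for large $t$: already for Brownian motion on $\mathbb{R}^n$ with $D=B(0,1)$ one has $\int_D p_t(0,y)\,\dif y\asymp \mu(D)\,t^{-n/2}$ as $t\to\infty$, which is far larger than $e^{-\epsilon(1-\delta)t\lambda(D)}$, so no choice of $R$ can make your tail bound $Ct/W(x,R)$ swallow it. The quantity the theorem actually controls (as its use in the proof of Theorem \ref{thm:5} via $\mathbb{P}_x(\tau_D>t)=\int_D p^D_t(x,y)\mu(\dif y)$ makes clear, and as the paper's own proof does from its first line) is $\int_D p^D_t(x,y)\mu(\dif y)$; the $p_t$ in \eqref{eq:p4} should be read as $p^D_t$. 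Once you work with $p^D_t$ throughout, the strong-Markov step disappears, and your remaining scheme --- Cauchy--Schwarz against the $L^2$-spectral decay of $P^D_s$, the on-diagonal bound $\|p^D_r(x,\cdot)\|_{L^2}^2=p^D_{2r}(x,x)\le p_{2r}(x,x)$, and a dyadic annular tail estimate from the off-diagonal part of \eqref{UE} --- is in substance the paper's argument (the paper interpolates via H\"older with exponents $2/\epsilon$ and $2/(2-\epsilon)$ rather than cutting at a radius $R$, but the threshold $\epsilon<\kappa$ arises from the same balance of $\alpha$ against $2\beta_1$).

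Two further points would still need repair. First, the time split $s=\epsilon(1-\delta)t$ cannot close the optimisation: with that $s$ the spectral factor is already exactly $e^{-\epsilon(1-\delta)t\lambda(D)}$, so the prefactor $(R/W^{-1}(x,2r))^{\alpha/2}$ forces $R\lesssim W^{-1}(x,t)$, and then the tail term $t/W(x,R)\gtrsim 1$ does not decay at all. You must instead give almost all the time to the spectral leg, say $s=(1-\delta')t$ with $\delta'$ small, choose $R\asymp W^{-1}(x,t)\,e^{\epsilon(1-\delta)t\lambda(D)/\beta_1}$ to kill the tail, and verify that the resulting prefactor $e^{\alpha\epsilon(1-\delta)t\lambda(D)/(2\beta_1)}$ is beaten by $e^{-(1-\delta')t\lambda(D)}$; this works precisely when $\epsilon(1-\delta)<\kappa$, which is where the hypothesis $\epsilon<\kappa$ enters. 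Second, \eqref{UE} is only available for $t<W(x,\overline{R})\wedge W(y,\overline{R})$, so on bounded spaces the regime $t\ge T_0=\inf_{x}W(x,\overline{R})$ must be treated separately (the paper does this with Lemma \ref{l.UE+} and the finiteness of $\mu(M)$); your argument silently applies \eqref{UE} for all $t$.
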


\begin{proof}
Fix $x\in D$.
For any $\epsilon \in (0,1)$, by Hölder's inequality we have
\begin{align}
\int_{D}p^D_t(x,y)\mu (\dif y) &=\int_{D}p^D_t(x,y)^{\epsilon }\cdot
p^D_t(x,y)^{1-\epsilon }\mu (\dif y) \\
&\leq \left( \int_{D}p^D_t(x,y)^{2}\mu (\dif y)\right) ^{\frac{\epsilon }{2}}\cdot \left( \int_{D}p^D_t(x,y)^{\frac{2(1-\epsilon )}{2-\epsilon }}\mu (\dif y)\right) ^{\frac{2-\epsilon }{2}} \\
&=:I_{1}(t)^{\epsilon /2}\cdot I_{2}(t)^{\frac{2-\epsilon }{2}}.\label{e.p4}
\end{align}

Since $p^D_t\in L^2(D,\mu)$ by Corollary \ref{c.prop1}, we know by \cite[Lemma 3.3]{MW25} that 
\begin{align}
I_{1}(t)=\int_{D}p^D_t(x,y)^{2}\mu (\dif y)&\leq\exp(-2(1-\delta )t\lambda
(D))\int_{D}p^{D}_{\delta t}(x,y)^{2}\mu (\dif y) \\
&\overset{\ref{it.HK4}}{\leq}\exp(-2(1-\delta )t\lambda (D))\esup_{y\in D}p^{D}_{\delta t}(x,y) \\
&\leq \exp(-2(1-\delta )t\lambda (D))\esup_{y\in D}p_{\delta t}(x,y),\label{e.p4i-1}
\end{align}
where in the last inequality we have used {Proposition} \ref{p.L3}. 

\begin{enumerate}[label=\textit{Step {\arabic*}.},align=right,leftmargin=*,topsep=5pt, parsep=0pt, itemsep=2pt]
\item\label{it.t4S1} Recall that $T_{0}=\inf_{x\in M}W(x,\overline{R})$. If $0<t<T_0$, then we can estimate $I_{1}(t)$ by using condition \eqref{UE}. By \eqref{e.p4i-1}, we have
\begin{align}
I_{1}(t)&\leq \exp(-2(1-\delta )t\lambda (D))\esup_{y\in D}p_{\delta t}(x,y) \\
&\overset{\eqref{UE}}{\leq} \exp(-2(1-\delta )t\lambda (D))\cdot \frac{C}{V(x,W^{-1}(x,\delta t))}.\label{e.p4i1}
\end{align}
{Next, we} estimate $I_{2}(t)$. Let $q=\frac{2}{2-\epsilon }$. {Write} \begin{align}
I_{21}(t)&:= \int_{D\cap B\left(x,W^{-1}(x,t)\right)}p^D_t(x,y)^{q(1-\epsilon)}\mu (\dif y)\\ \text { and } I_{22}(t)&:= \int_{D\setminus B\left(x,W^{-1}(x,t)\right)}p^D_t(x,y)^{q(1-\epsilon)}\mu (\dif y),
\end{align}
so that $I_{2}(t)=I_{21}(t)+I_{22}(t)$.

For $I_{21}(t)$, 
\begin{align}
I_{21}(t) &\overset{\eqref{UE}}{\leq} \frac{C_{\mathrm{UE}}^{q(1-\epsilon
)}}{V(x,W^{-1}(x, t))^{q(1-\epsilon )}}V\left(x,W^{-1}(x,t)\right) \\
&= C_{\mathrm{UE}}^{q(1-\epsilon)}V\left(x,W^{-1}(x,t)\right)^{\frac{\epsilon}{2-\epsilon}}. 
\label{e.p40}
\end{align}
For $I_{22}(t)$, note by \eqref{eq:vol_0} that for any $y\in M\setminus B(x, W^{-1}(x,t))$,
\begin{equation*}
  \frac{t}{W(x,y)}=\frac{W(x,W^{-1}(x,t))}{W(x,d(x,y))}\leq C{\left(\frac{W^{-1}(x,t)}{d(x,y)}\right)^{\beta _{1}}},
\end{equation*}
therefore, {by \eqref{UE},}
\begin{align}
I_{22}(t) &\leq C\int_{B(x,W^{-1}(x,t))^{c}}\frac{C_{\mathrm{UE}}^{q(1-\epsilon )}%
}{V(x,d(x,y))^{q(1-\epsilon )}}\left(\frac{W^{-1}(x,t)}{%
d(x,y)}\right) ^{\beta _{1}q(1-\epsilon )}\mu (\dif y) \\
&\leq C\sum_{i=0}^{\infty }\int_{B(x,2^{i+1}%
W^{-1}(x,t))\setminus B(x,2^{i}W^{-1}(x,t))}\frac{C_{\mathrm{UE}}^{q(1-\epsilon )}\left(2^{-i}\right) ^{\beta _{1}q(1-\epsilon )}}{%
V(x,d(x,y))^{q(1-\epsilon )}}\mu (\dif y) \\
&\leq CC_{\mathrm{UE}}^{q(1-\epsilon )}\sum_{i=0}^{\infty }\int_{B(x,2^{i+1}%
W^{-1}(x,t))}\frac{1}{V(x,2^{i}W^{-1}(x,t))^{q(1-\epsilon )}}\cdot 
\frac{\mu (\dif y)}{2^{i\beta _{1}q(1-\epsilon )}} \\
&\leq CC_{\mathrm{UE}}^{q(1-\epsilon )}C_{\mathrm{VD}}\sum_{i=0}^{\infty }\frac{1}{V(x,2^{i}%
W^{-1}(x,t))^{q(1-\epsilon )-1}}\cdot \frac{1}{2^{i\beta
_{1}q(1-\epsilon )}}.\label{e.p41}
\end{align}
By \eqref{VD},
\begin{equation}
V(x,2^{i}W^{-1}(x,t))\leq C_{\mathrm{VD}}V(x,W^{-1}(x,t))2^{i\alpha },
\end{equation}
which implies
\begin{align}
\frac{1}{V(x,2^{i}W^{-1}(x,t))^{q(1-\epsilon )-1}}&=V(x,2^{i}W^{-1}(x,t)
)^{\frac{\epsilon }{2-\epsilon }}\\
&\leq (2^{i\alpha }C_{\mathrm{VD}})^{\frac{\epsilon }{%
2-\epsilon }}V(x,W^{-1}(x,t))^{%
\frac{\epsilon }{2-\epsilon }}.
\end{align}
Combining this and \eqref{e.p41}, we have
\begin{align*}
I_{22}(t)&\leq C\sum_{i=0}^{\infty }{2^{-\frac{%
i(1-\epsilon )}{2-\epsilon }\left( 2\beta _{1}-\frac{\alpha
\epsilon }{1-\epsilon }\right) }} V(x,W^{-1}(x,t))^{\frac{%
\epsilon }{2-\epsilon }}.
\end{align*}
Since $0<\epsilon <\frac{2\beta _{1}}{\alpha +2\beta _{1}}$, we have%
\begin{equation*}
I_{22}(t)\leq \frac{C}{1-2^{{-}\frac{1-\epsilon }{%
2-\epsilon }\left( 2\beta _{1}-\frac{\alpha \epsilon }{1-\epsilon }%
\right) }}V(x,W^{-1}(x,t))^{\frac{\epsilon }{2-\epsilon }},
\end{equation*}
which, combined with \eqref{e.p4} and \eqref{e.p40}, gives \eqref{eq:p4} for $t\in(0,T_{0})$. 

\item\label{it.t4S2} If $T_{0}=\infty$, then \eqref{eq:p4} is proved by \ref{it.t4S1} {If $T_{0}=\inf_{x\in M}W(x,\overline{R})<\infty$, then $\overline{R}=\diam(M)<\infty$ since $W(x,\cdot)$ is strictly increasing for each $x\in M$.} Therefore the metric space is bounded and $\mu(M)<\infty$ by the precompactness. {By} \eqref{UE}, \eqref{p-1} and Lemma \ref{l.UE+},
\begin{align}
\esup_{y\in D}p_{\delta t}(x,y)\leq 	\begin{dcases}
		\frac{C_{\mathrm{UE}}}{V(x, W^{-1}(x,\delta t))}\leq \frac{C\left(\frac{T_0}{\delta t}\right)^{\frac{\alpha}{\beta_1}}}{V(x,\overline{R})}\leq \frac{C\delta^{-\frac{\alpha}{\beta_1}}}{V(x,\overline{R})},\  & \text{if}\ \delta t< T_0, \\ 
		\frac{C}{V(x,\overline{R})},\  & \text{if}\ \delta t\geq T_0.
	\end{dcases}
\end{align}
Therefore, by \eqref{e.p4i-1}, we have
\begin{equation}
I_{1}(t)\leq C\exp(-2(1-\delta )t\lambda (D))\frac{\delta^{-\frac{\alpha}{\beta_1}}}{V(x,\overline{R})}. \label{e.i1-2}
\end{equation}
By Proposition \ref{p.L3} and Lemma \ref{l.UE+},
\begin{align}
 I_2(t)&= \int_{D}p^D_t(x,y)^{\frac{2(1-\epsilon )}{2-\epsilon }}\mu (\dif y)\\
 &\leq \int_{D}p_t(x,y)^{\frac{2(1-\epsilon )}{2-\epsilon }}\mu (\dif y)\leq \frac{C\mu(D)}{V(x,\overline{R})^{\frac{2(1-\epsilon )}{2-\epsilon }}}.\label{e.i2}
\end{align}
By \eqref{e.p4}, \eqref{e.i1-2} and \eqref{e.i2}, we obtain
\begin{align*}
 \int_{D}p_{t}^{{D}}(x,y)\mu(\dif y)&\leq C\exp(-\epsilon(1-\delta )t\lambda (D)) \delta^{-\frac{\epsilon\alpha}{2\beta_1}}
  \left(\frac{\mu(D)}{V(x,\overline{R})}\right)^{1-\epsilon/2} \\
  &\leq C\delta^{-\frac{\epsilon\alpha}{2\beta_1}}\exp(-\epsilon(1-\delta )t\lambda (D)),
\end{align*}
thus showing \eqref{eq:p4}.
\end{enumerate}
The proof is complete.
\end{proof}

\begin{lemma}\label{l.quasi=}
Let $(\mathcal{E}, \mathcal{F})$ be a regular Dirichlet form on $L^{2}(M,\mu)$ associated with Hunt process $X=(\{X_{t}\}_{t\geq0}, \{\mathbb{P}_{x}\}_{x\in M})$. Let $p_{t}(x,y)$ be a {pointwise heat kernel} of $(\mathcal{E}, \mathcal{F})$. Then for any $f\in L^{2}(M,\mu)$ and any $t>0$, there is a {properly} exceptional set $\mathcal{N}\subset M$ such that
\begin{equation}\label{e.quasi=}
\mathbb{E}^{x}[f(X_{t})]=\int_{M}p_t(x,y)f(y)\mu(\dif y), \text{ {for all} } x\in M\setminus \mathcal{N}.
\end{equation}
\end{lemma}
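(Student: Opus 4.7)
The plan is to identify both sides of \eqref{e.quasi=}, as functions of $x$, with $\mathcal{E}$-quasi-continuous $\mu$-versions of the same element $P_{t}f\in L^{2}(M,\mu)$, then invoke the standard uniqueness of quasi-continuous modifications to conclude that they coincide outside a polar set, and finally enlarge this polar set to a properly exceptional set.

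For the right-hand side, axiom \ref{it.HK7} of Definition \ref{phk} is tailor-made: it states that $x\mapsto \int_{M}p_{t}(x,y)f(y)\mu(\dif y)$ is continuous on each $F_{k}$ of the regular $\mathcal{E}$-nest and equals $P_{t}f$ for $\mu$-a.e.\ $x$, so this function is already an $\mathcal{E}$-quasi-continuous $\mu$-version of $P_{t}f$. For the left-hand side, when $f\in L^{2}(M,\mu)\cap L^{\infty}(M,\mu)$ this is precisely Fukushima's theorem \cite[Theorem 7.2.1]{FOT11}, already quoted in the introduction, which produces an $\mathcal{E}$-quasi-continuous $\mu$-version of $P_{t}f$ via $x\mapsto \mathbb{E}^{x}[f(X_{t})]$. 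To extend to arbitrary $f\in L^{2}(M,\mu)$, I would truncate $f_{n}:=(f\wedge n)\vee(-n)\in L^{2}\cap L^{\infty}$ and pass to the limit in the identity $\mathbb{E}^{x}[f_{n}(X_{t})]=\int_{M}p_{t}(x,y)f_{n}(y)\mu(\dif y)$ already proved outside some polar set $\mathcal{N}_{n}$: for $x$ in the complement of the set $\{x:p_{t}(x,\cdot)\notin L^{2}(M,\mu)\}$, which is polar by combining \ref{it.HK6} and \ref{it.HK4} (since $\|p_{t}(\cdot,x)\|_{L^{2}}^{2}=p_{2t}(x,x)$), monotone convergence applied to $|f|_{n}:=\min(|f|,n)$ first yields $\mathbb{E}^{x}[|f|(X_{t})]=\int_{M}p_{t}(x,y)|f|(y)\mu(\dif y)<\infty$ (finiteness by Cauchy--Schwarz), after which dominated convergence on both sides delivers the identity for $f$ itself.

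With both sides of \eqref{e.quasi=} now identified as $\mathcal{E}$-quasi-continuous $\mu$-versions of the same $P_{t}f$, two classical facts finish the proof: \cite[Lemma 2.1.4]{FOT11} guarantees that two quasi-continuous functions which agree $\mu$-a.e.\ must agree $\mathcal{E}$-quasi-everywhere, and \cite[Theorem 4.1.1]{FOT11} guarantees that any polar set is contained in a properly exceptional set, which is the required $\mathcal{N}$. The main technical obstacle is the truncation/limit passage extending Fukushima's identification from bounded to arbitrary $L^{2}$ functions, since both sides must be controlled simultaneously; a cleaner alternative, bypassing truncation altogether, is to observe via the spectral theorem that $P_{t}$ maps $L^{2}(M,\mu)$ into $\mathcal{F}$, so $P_{t}f$ automatically admits an $\mathcal{E}$-quasi-continuous representative, and then to identify that representative on each side directly.
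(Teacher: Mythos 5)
Your proposal is correct and follows the same skeleton as the paper's proof: both sides of \eqref{e.quasi=} are recognized as $\mathcal{E}$-quasi-continuous $\mu$-versions of $P_{t}f$ (the right-hand side via \ref{it.HK7}), two quasi-continuous $\mu$-versions agree quasi-everywhere, and the resulting polar set is enlarged to a properly exceptional one. The only real divergence is in how the left-hand side is identified: the paper cites \cite[Proposition 3.1.9 and Theorem 1.3.14]{CF12}, which directly gives that $x\mapsto\mathbb{E}^{x}[f(X_{t})]$ is quasi-continuous and a $\mu$-version of $P_{t}f$ for \emph{every} $f\in L^{2}(M,\mu)$, whereas you only invoke Fukushima's theorem for bounded $L^{2}$ functions and then run a truncation/limit argument. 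Your extension is sound — $\lVert p_{t}(x,\cdot)\rVert_{L^{2}}^{2}=p_{2t}(x,x)<\infty$ by \ref{it.HK5}--\ref{it.HK6}, the countable union of the polar sets $\mathcal{N}_{n}$ is polar, and monotone then dominated convergence pass the identity to the limit — but note that after this limit passage you have the \emph{identity} for general $f$ outside a polar set without having shown the left-hand side is itself quasi-continuous, so your closing sentence ("both sides now identified as quasi-continuous $\mu$-versions") overstates what the truncation delivers; the uniqueness lemma is really only applied at the level of each $f_{n}$, and for general $f$ you conclude directly. Either route works; the paper's reference simply makes the truncation unnecessary.
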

\begin{proof}
By \cite[Proposition 3.1.9 and Theorem 1.3.14]{CF12}, the function $x\mapsto \mathbb{E}^{x}[f(X_{t})]$ is $\mathcal{E}$-quasi-continuous and $P_{t}f(x)=\mathbb{E}^{x}[f(X_{t})]$ for $\mu$-a.e. $x\in M$. By \ref{it.HK7}, the function {$x\rightarrow \int_{M}p_t(x,y)f(y)\mu(\dif y)$} is also $\mathcal{E}$-quasi-continuous and is a $\mu$-version of $P_{t}f$. {Since} \begin{equation}
\mathbb{E}^{x}[f(X_{t})]=\int_{M}p_t(x,y)f(y)\mu(\dif y),\ \text{for $\mu$-a.e. $x\in M$},
\end{equation}
combining \cite[Lemma 2.1.5]{FOT11} and \cite[Theorems 3.1.3 and 3.1.5]{CF12}, we obtain \eqref{e.quasi=}.
\end{proof}

\begin{proof}[Proof of Theorem \ref{thm:5}]
{Applying} Lemma \ref{l.quasi=} to the regular Dirichlet form $(\mathcal{E},\mathcal{F}(D))$ with the part process {$X^D_t$} in \eqref{e.Huntpart} and with the indicator function $\one_{D}$, we see by \cite[Theorem 3.3.8]{CF12} that there is a properly exceptional set $\mathcal{N}_{t,D}\subset D$ such that 
\begin{equation}  \label{B}
\mathbb{P}_x(\tau_D > t) =\int_{D}p^{D}_{t}(x,y)\mu(\dif y), \text{ {for all} } x \in D\setminus \mathcal{N}_{t,D}.
\end{equation}

By Theorem \ref{thm4}, \eqref{B} implies that
\begin{equation}
\mathbb{P}_{x}(\tau_D > t) \leq C_{\epsilon ,\delta}
\exp\left(-\epsilon(1-\delta) t \lambda(D)\right),\text{ {for all} }  x \in D
\setminus \mathcal{N}_{t,D}.
\end{equation}

Let $\mathcal{V}$ be {a} countable topological basis for $M$ and consider 
\begin{equation*}
\mathcal{N} := \bigcup_{\substack{ V \in \mathcal{V}  \\ t \in \mathbb{Q}%
^+}} \mathcal{N}_{t,V},
\end{equation*}
then $\mathcal{N}$ is a properly exceptional set. We claim it is the desired
set. In fact, for any open set $D \subset M$, we can find $ \{D_n\} \subset \mathcal{V%
}$ such that $D_n \subset D_{n+1}$, $D = \bigcup_{n=1}^\infty D_n$. For any $x
\in D \setminus \mathcal{N}$, there exists $ m \geq 1$ such that $x \in D_k \setminus 
\mathcal{N}$ for all $k \geq m$. Hence, 
\begin{equation*}
x \in D_k \setminus {\mathcal{N}_{t,D_k}}, \quad \text{for all } t \in \mathbb{Q}^+,
\,  k \geq m.
\end{equation*}
By \eqref{B}, for any $t \in \mathbb{Q}^+$ and $k \geq m$, 
\begin{equation*}
\mathbb{P}_{x}(\tau_{D_k} > t)\leq C_{\epsilon ,\delta}
\exp\left(-\epsilon(1-\delta) t \lambda(D_k)\right).
\end{equation*}
Letting $k \to \infty$ and using the fact that $D_k \subset D$ implies $\lambda(D_k)\geq \lambda(D)$, we obtain 
\begin{equation}\label{e.pf21}
\mathbb{P}_x(\tau_D > t) \leq C_{\epsilon ,\delta}
\exp\left(-\epsilon(1-\delta) t \lambda(D)\right).
\end{equation}
{for all $t \in \mathbb{Q}^+$. A dense argument gives that \eqref{e.pf21} holds for all $t\in(0,\infty)$.}

Choosing $\delta=\frac{1}{3}$ and $\epsilon=\frac{3}{4}\kappa =\frac{3\beta_1}{{2(\alpha+2\beta_1)}}$ above, we know that
\begin{equation*}
\sup_{x\in D\setminus \mathcal{N}}\mathbb{P}_x(\tau_D > t)\leq C_{\frac{3}{4}\kappa,\frac{1}{3}}
\exp \left(-\frac{1}{2}\kappa t\lambda (D)\right), 
\end{equation*}
thus showing \eqref{eq:th5}. The proof is complete.
\end{proof}

\section{Characterization of heat kernel upper bounds}
\label{Sect-4}
{In this section, we will prove Theorem \ref{t.main}. First, we introduce generalized capacity condition and survival estimate. Prior to that, it is necessary to define certain terms.}

{For a fixed number $\kappa \geq 1$, a measurable function $%
\phi $ is a $\kappa $-\emph{cutoff function} for $U\Subset V$, denoted by $%
\phi \in \kappa $-$\mathrm{cutoff}(U,V)$, if $\phi \in \mathcal{F}$, $0\leq \phi
\leq \kappa ,\phi \geq 1\ \text{on}\ U,\text{ and }\phi =0\ \text{on}\ V^{c}$%
. When $\kappa =1$, we call the $1$-cutoff function the \emph{cutoff
function} and simply write $1$-$\mathrm{cutoff}(U,V)$ as $\mathrm{cutoff}(U,V)$. It is known
that $\mathrm{cutoff}(U,V)$ is non-empty for any two non-empty open subsets $%
U\Subset V$ of $M$, if $(\mathcal{E},\mathcal{F})$ is regular.}

{Define the vector space $\mathcal{F}^{\prime }$ by 
\begin{equation}
\mathcal{F}^{\prime }:=\{v+a:\ v\in \mathcal{F},\ a\in \mathbb{R}\}.
\label{ff1}
\end{equation}
We extend the domain $\mathcal{F}$ of $%
\mathcal{E}$ to the space $\mathcal{F}^{\prime }$ as follows: for any $%
u,v\in \mathcal{F}$ and any $a,b\in \mathbb{R}$, set $\mathcal{E}(u+a,v+b):=%
\mathcal{E}(u,v)$. We remark that this extension is well-defined for any
regular Dirichlet form without killing part.
\begin{definition}[\text{\cite[Definition 2.1]{GHH24c}}]
We say that the \emph{generalized capacity condition} \eqref{Gcap} is satisfied if there exist two numbers $\overline{\kappa }\geq1,C>0 $ such that, for any $u\in \mathcal{F}^{\prime}\cap L^{\infty }$ and
for any pair of concentric balls $B_{0}:=B(x_{0},R)$, $B:=B(x_{0},R+r)$ with 
$x_{0}\in M$ and $0<R<R+r<\overline{R}$, there exists some $\phi \in 
\overline{\kappa }$-$\mathrm{cutoff}(B_{0},B)$ such that 
\begin{equation}\tag{$\operatorname{Gcap}$}
\mathcal{E}(u^{2}\phi ,\phi )\leq \sup_{x\in B}\frac{C}{W(x,r)}%
\int_{B}u^{2}\dif\mu .  \label{Gcap}
\end{equation}
\end{definition}
\begin{definition}[\text{\cite[Definition 7.1]{GHH24c}}]We say that the \emph{survival estimate} \eqref{S} holds if there exist $\eta_1,\eta_2\in (0,1)$ such that for all $t>0$ and each ball $B=B(x_0,r)$ of radius $r<\overline{R}$ and $t\leq \eta_1W(x_0,r)$,
\begin{equation}\tag{$\operatorname{S}$}
  P^B_t\one_B\geq \eta_2\ \text{in $\frac{1}{4}B$}. \label{S}
\end{equation}
\end{definition}
We collect some useful known facts that will be used later in this paper.
\begin{proposition}\label{p.facts}
	Let $(\mathcal{E},\mathcal{F})$ be a regular Dirichlet form without killing part. \begin{enumerate}[label=\textup{({\arabic*})},align=right,leftmargin=*,topsep=5pt,parsep=0pt,itemsep=2pt]
	\item\label{it.fact2}  If condition \eqref{VD} holds, then \[\eqref{FK}+\eqref{Gcap}+\eqref{J<}\Longleftrightarrow\eqref{UE}+\eqref{C}+\eqref{RVD}\Longrightarrow \eqref{FK}+\eqref{S}.\]
	\item\label{it.fact3} $\eqref{E}\Longrightarrow \eqref{Eb}\Longrightarrow \eqref{EM}$.
	\item\label{it.fact4} $\eqref{VD}+\eqref{FK}+\eqref{Gcap} +\eqref{J<} \Longrightarrow \eqref{E}$.
	\item\label{it.fact5} $\eqref{FK}\Longrightarrow \textup{(\hyperlink{E<}{$\operatorname{\mathrm{E}_{\leq}}$})}$ and $\eqref{E}\Longrightarrow \eqref{Gcap}$.
\end{enumerate}
\end{proposition}
\begin{proof}
	\begin{enumerate}[label=\textup{({\arabic*})},align=right,leftmargin=*,topsep=5pt,parsep=0pt,itemsep=2pt]
	\item[\ref{it.fact2}] By the same proof of \cite[Lemma 6.9]{HL22}, we know that \[\eqref{VD}+\eqref{FK}+\eqref{J<} \Longrightarrow \eqref{RVD}.\] The claim is then a consequence of \cite[Theorem 2.15 for $q=\infty$, Lemma 10.3 and Proposition 10.4]{GHH24c}.
	\item[\ref{it.fact3}] This is proved in \cite[Lemma 7.12]{MW25}.
	\item[\ref{it.fact4}] See \cite[Lemma 11.2 and Corollary 12.5]{GHH24a}.
	\item[\ref{it.fact5}] See \cite[Lemma 12.2, Proposition 13.4 and Lemma 13.5]{GHH24a}.
\end{enumerate}
\end{proof}
\begin{lemma}
Let $(\mathcal{E},\mathcal{F})$ be a regular Dirichlet form without killing part. Then,
\begin{equation*}
 \eqref{VD}+\eqref{S}+\eqref{eq:th5}\Longrightarrow (\hyperlink{L<}{\operatorname{\lambda_{\leq}}}).
\end{equation*}
\end{lemma}
}
\begin{proof}
Let $B=B(x_0,r)$ with $r\in(0,\overline{R})$. {By condition \eqref{S}, we know that 
there exist $\eta_1,\eta_2\in (0,1)$ such that for all $t\leq \eta_1W(x_0,r)$,
\begin{equation*}
  P^B_t\one_B\geq \eta_2\ \text{in $\frac{1}{4}B$}.
\end{equation*}
}By \eqref{eq:th5}, we know that there exists $C>0$ such that
\begin{equation*}
  P^B_t\one_B(x)\leq C\exp\left(-\frac{1}{2}\kappa t\lambda(B)\right)\ \text{for $\mu$-a.e. $x\in \frac{1}{4}B$}.
\end{equation*}
Combining the above two inequalities, we know that for all $t\leq\eta_1W(x_0,r)$,
\begin{equation*}
 \eta_2 \leq C\exp\left(-\frac{1}{2}\kappa t\lambda(B)\right),
\end{equation*}
which implies
\begin{equation*}
  \eta_2\eta_1W(x_0,r)\leq C\int_0^{\eta_1W(x_0,r)}\exp\left(-\frac{1}{2}\kappa t\lambda(B)\right)\dif t.
\end{equation*}
Therefore,
\begin{equation*}
  \lambda(B)\leq \frac{2C}{\eta_1\eta_2\kappa }\frac{1}{W(x_0,r)},
\end{equation*}
so (\hyperlink{L<}{$\operatorname{\lambda_{\leq}}$}) holds.  The proof is complete.
\end{proof}

{Using \eqref{FK-1}, we obtain the following corollary as a consequence of Theorem \ref{thm:5} and Proposition \ref{p.facts}-\ref{it.fact2}.}

\begin{corollary}\label{L10}
Let $(\mathcal{E},\mathcal{F})$ be a regular Dirichlet form without killing part. Under condition \eqref{VD}, we have
\begin{equation*}
  \eqref{UE}+\eqref{C}+\eqref{RVD}\Longrightarrow \eqref{FK}+\eqref{lambda}.
\end{equation*}
\end{corollary}

The following assertion says that condition \textup{(\hyperlink{L<}{$\operatorname{\lambda_{\leq}}$})},
together with condition \eqref{EM}, will imply condition \textup{(\hyperlink{E>}{$\operatorname{\mathrm{E}_{\geq}}$})} {under \eqref{VD}}.
\begin{lemma}\label{L12}
Let $(\mathcal{E},\mathcal{F})$ be a regular Dirichlet form. Assume \eqref{VD}{. Then} we have 
\begin{equation*}
\textup{(\hyperlink{L<}{$\operatorname{\lambda_{\leq}}$})}+\eqref{EM}\Longrightarrow \textup{(\hyperlink{E>}{$\operatorname{\mathrm{E}_{\geq}}$})}.
\end{equation*}
\begin{proof}
By condition \eqref{EM}, there exist $C>0$, $A>1$, $\sigma \in (0,1)$ and a properly exceptional set $\mathcal{N}\subset M$ such that for all $x\in M\setminus \mathcal{N}$ and all $r\in(0,A^{-1}\sigma \overline{R})$,
\begin{equation}
C\sup_{y\in B\left(x,r\right)\setminus  \mathcal{N}}\mathbb{E}_{y}\left[\tau_{B\left(x,r\right)}\right]
\leq\mathbb{E}_{x}\left[\tau_{B\left(x,Ar\right)}\right]. \label{EM-1}
\end{equation}

For any $x\in M\setminus \mathcal{N}$ and $r\in(0,A^{-1}\sigma \overline{R})$, we know by {\eqref{e.Gr}} that 
  \begin{align*}
    1&\leq \lambda(B(x,r))\esup_{y\in B(x,r)}\mathbb{E}_y\left[\tau_{B(x,r)}\right] \\
    &\overset{\textup{(\hyperlink{L<}{$\operatorname{\lambda_{\leq}}$})}}{\leq} \frac{C}{W(x,r)}\sup_{y\in B(x,r)\setminus  \mathcal{N}}\mathbb{E}_y\left[\tau_{B(x,r)}\right] \overset{\eqref{EM-1}}{\leq}  \frac{C}{W(x,r)}\mathbb{E}_{x}\left[\tau_{B(x,Ar)}\right],
  \end{align*}
which implies \begin{equation}
\mathbb{E}_{x}\left[\tau_{B(x,Ar)}\right]\geq C^{-1}W(x,r)\geq C' A^{-\beta_2}W(x,Ar).
\end{equation} By adjusting the constants, we see that \textup{(\hyperlink{E>}{$\operatorname{\mathrm{E}_{\geq}}$})} holds. 
\end{proof}
\end{lemma}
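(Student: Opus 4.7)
The plan is to chain together the standard variational lower bound for $\lambda(B)$ in terms of mean exit times with the two hypotheses (\hyperlink{L<}{$\operatorname{\lambda_{\leq}}$}) and \eqref{EM}. Specifically, for any open set $\Omega$ with $\lambda(\Omega)>0$ one has the universal inequality
\begin{equation*}
\lambda(\Omega)\,\esup_{y\in\Omega}\mathbb{E}_y[\tau_\Omega]\geq 1,
\end{equation*}
which can be found, for example, in \cite[Lemma 6.2]{GH14}. Apply this with $\Omega=B(x,r)$ for $x$ outside the properly exceptional set $\mathcal{N}$ coming from \eqref{EM} and $r$ small enough (say $r\in(0,A^{-1}\sigma\overline{R})$ so that \eqref{EM} is usable with radius $r$ and dilation $A$).

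Next, estimate each factor from above. The factor $\lambda(B(x,r))$ is bounded above using (\hyperlink{L<}{$\operatorname{\lambda_{\leq}}$}) by $C/W(x,r)$. The essential supremum of the mean exit time is bounded, thanks to \eqref{EM}, by $C^{-1}\mathbb{E}_x[\tau_{B(x,Ar)}]$. Plugging both bounds into the variational inequality gives, after rearranging,
\begin{equation*}
W(x,r)\leq C'\,\mathbb{E}_x[\tau_{B(x,Ar)}].
\end{equation*}

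To convert this into (\hyperlink{E>}{$\operatorname{\mathrm{E}_{\geq}}$}), I would simply change variables, writing $R=Ar$, and use the polynomial control of the scale function from \eqref{eq:vol_0}, namely $W(x,R)\leq C\,A^{\beta_2}W(x,R/A)$. This yields $\mathbb{E}_x[\tau_{B(x,R)}]\geq C''\,W(x,R)$ for all $x\in M\setminus\mathcal{N}$ and all admissible $R$, with a renamed constant $\sigma$ absorbing the factor of $A$. Note that (VD) is not actually needed in the argument itself; it enters only implicitly through the earlier definitions and through the way \eqref{EM} and (\hyperlink{L<}{$\operatorname{\lambda_{\leq}}$}) are stated.

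I do not expect any real obstacle: the only subtle point is bookkeeping the exceptional set $\mathcal{N}$ (ensuring the variational lower bound, which involves an essential supremum, can be replaced by a pointwise supremum over $B(x,r)\setminus\mathcal{N}$ so that \eqref{EM} applies). This is clean because the \eqref{EM} hypothesis is already stated using the supremum outside $\mathcal{N}$, and $\esup_{y\in B(x,r)}\mathbb{E}_y[\tau_{B(x,r)}]=\sup_{y\in B(x,r)\setminus\mathcal{N}'}\mathbb{E}_y[\tau_{B(x,r)}]$ for the appropriate enlargement of the exceptional set.
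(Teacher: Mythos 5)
Your proposal is correct and follows essentially the same route as the paper's proof: both invoke the variational bound $\lambda(B(x,r))\geq(\esup_{y\in B(x,r)}\mathbb{E}_y[\tau_{B(x,r)}])^{-1}$ from \cite[Lemma 6.2]{GH14}, bound $\lambda$ above via (\hyperlink{L<}{$\operatorname{\lambda_{\leq}}$}) and the exit-time supremum via \eqref{EM}, and then rescale $r\mapsto Ar$ using \eqref{eq:vol_0}. The bookkeeping of the exceptional set that you flag (passing from the essential supremum to the supremum over $B(x,r)\setminus\mathcal{N}$, valid since $\mu(\mathcal{N})=0$) is handled the same way, implicitly, in the paper.
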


\begin{proof}[Proof of Theorem \ref{t.main}]
  {By Propositions \ref{p.facts}-\ref{it.fact2}, \ref{it.fact4} and \ref{it.fact3}},
    \[
  \eqref{UE}+\eqref{C}+\eqref{RVD}\Longrightarrow   \eqref{E}+\eqref{J<}\Longrightarrow  \eqref{EM}+\eqref{J<} .
    \]
    {Together with Corollary \ref{L10}, we obtain,}
    \begin{equation}\label{eq:13-1}
    \eqref{UE}+\eqref{C}+\eqref{RVD}\Longrightarrow \eqref{FK}+ \eqref{lambda}+\eqref{EM}+\eqref{J<}.
    \end{equation}

    On the other hand, using Lemma \ref{L12} {and Proposition \ref{p.facts}-\ref{it.fact5}}, we have
    \begin{equation}
        \eqref{FK}+ \textup{(\hyperlink{L<}{$\operatorname{\lambda_{\leq}}$})}+\eqref{EM}\Longrightarrow \eqref{E} \Longrightarrow \eqref{Gcap}.\label{eq:13-3}
    \end{equation}
  Using {Proposition \ref{p.facts}-\ref{it.fact2}} again, we obtain
 from \eqref{eq:13-3} that
\begin{equation}\label{eq:13-2}
  \eqref{FK}+\textup{(\hyperlink{L<}{$\operatorname{\lambda_{\leq}}$})}+\eqref{EM}+\eqref{J<}\Longrightarrow \eqref{UE}+\eqref{C}+\eqref{RVD}.
\end{equation}
Combining \eqref{eq:13-1} and \eqref{eq:13-2}, we {obtain} \eqref{eq:equ}. 

Clearly, $\eqref{Eb}\Longrightarrow \eqref{EM}$. To prove \eqref{eq:eq+}, it suffices to prove 
\begin{equation*}
  \eqref{FK}+ \textup{(\hyperlink{L<}{$\operatorname{\lambda_{\leq}}$})}+\eqref{EM}\Longrightarrow  \eqref{FK}+ \textup{(\hyperlink{L<}{$\operatorname{\lambda_{\leq}}$})}+\eqref{Eb},
\end{equation*}
which is followed by \eqref{eq:13-3} and Proposition \ref{p.facts}-\ref{it.fact3}.
The proof is complete.
\end{proof}

\section{A note on condition \texorpdfstring{$(\mathrm{E_M})$}{(E\_M)} }\label{s.DisEm}

 A notable property of condition \eqref{EM} in this version is its invariance under quasi-symmetric changes of the metric, {provided that the metric is uniformly perfect (a mild condition)}. Recall that {a} metric space $(M,d)$ is said to be \emph{$K$-uniformly perfect}, if $B(x,r)\setminus B(x,{K^{-1}r})\neq\emptyset$ for any ball $B(x,r)$ such that $M\setminus B(x,r)\neq\emptyset$, and \emph{uniformly perfect} if it is $K$-uniformly perfect for some $K\in(1,\infty)$. It is well-known that the uniformly perfect property is (quantitatively) invariant under quasi-symmetric change of metric \cite[Exercise 11.2]{Hei01}.
\begin{theorem}\label{t.QS}
Condition \eqref{EM} is quasi-symmetric-invariant on uniformly perfect metric spaces.
\end{theorem}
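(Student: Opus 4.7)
The plan is to reduce the theorem to a ball-sandwiching lemma and then chain the mean exit time inequality via the three-point quasi-symmetric inequality. Throughout, let $\mathrm{id}: (M,d) \to (M,d')$ be quasi-symmetric with distortion function $\eta$, let $\eta'$ denote the distortion of the inverse map, and suppose $(M,d)$ is $K$-uniformly perfect. Since the Dirichlet form and the Hunt process depend only on $(\mathcal{E},\mathcal{F})$ on $L^{2}(M,\mu)$ (and not on the metric), the exit time $\tau_{U}$ is a function of the set $U\subset M$ alone, so a change of metric affects \eqref{EM} only through the identification of balls.

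The first step is the following sandwich lemma: for any $x \in M$ and $r' > 0$ with $B_{d'}(x, r') \neq M$, the quantity
\[
s := \sup\{\rho > 0 : B_d(x, \rho) \subset B_{d'}(x, r')\}
\]
lies in $(0, \infty)$ by topological equivalence of $d$ and $d'$, and satisfies $B_d(x, s) \subset B_{d'}(x, r') \subset B_d(x, Ls)$ with $L := 2\eta'(1)$. The inner inclusion is immediate. For the outer inclusion, maximality of $s$ provides, for each $\rho > s$, a point $w_\rho$ with $d(x, w_\rho) < \rho$ and $d'(x, w_\rho) \geq r'$; then for any $y \in B_{d'}(x, r') \setminus \{x\}$, the quasi-symmetric inequality for $\mathrm{id}: (M, d') \to (M, d)$ yields
\[
\frac{d(x, y)}{d(x, w_\rho)} \leq \eta'\!\left(\frac{d'(x, y)}{d'(x, w_\rho)}\right) \leq \eta'(1),
\]
and letting $\rho \downarrow s$ gives $d(x, y) \leq \eta'(1) s$.

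Assuming \eqref{EM} for $(M, d)$ with constants $C, A$ and exceptional set $\mathcal{N}$, the second step transfers it to $(M, d')$. Apply \eqref{EM} for $d$ to the ball $B_d(x, Ls)$. Combined with $B_{d'}(x, r') \subset B_d(x, Ls)$ and monotonicity of exit times under set inclusion, this gives
\[
\sup_{y \in B_{d'}(x, r') \setminus \mathcal{N}} \mathbb{E}_y[\tau_{B_{d'}(x, r')}] \leq \sup_{y \in B_d(x, Ls) \setminus \mathcal{N}} \mathbb{E}_y[\tau_{B_d(x, Ls)}] \leq C^{-1} \mathbb{E}_x[\tau_{B_d(x, ALs)}].
\]
It then suffices to embed $B_d(x, ALs) \subset B_{d'}(x, A' r')$ for some $A' := \eta(ALK)$, which is where uniform perfectness enters: it supplies a point $z$ with $s/K \leq d(x, z) < s$ (possible since $B_d(x, s) \subsetneq M$), so $z \in B_d(x, s) \subset B_{d'}(x, r')$ and thus $d'(x, z) < r'$; for any $y \in B_d(x, ALs)$, the quasi-symmetric inequality for $\mathrm{id}: (M, d) \to (M, d')$ then gives
\[
d'(x, y) \leq \eta\!\left(\frac{d(x, y)}{d(x, z)}\right) d'(x, z) \leq \eta(ALK)\, r'.
\]
Chaining these estimates proves \eqref{EM} for $(M, d', \mu)$ with constants $C, A'$ and the same exceptional set.

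The remaining bookkeeping concerns admissible radii: quasi-symmetric maps preserve boundedness, so $\overline{R}$ and $\diam_{d'}(M)$ are simultaneously finite or infinite, and a suitable $\sigma' \in (0, 1)$ can be chosen so that $r' \in (0, \sigma' \diam_{d'}(M))$ guarantees both $ALs < \sigma \overline{R}$ and $B_{d'}(x, r') \neq M$. The main substantive difficulty is the embedding $B_d(x, ALs) \subset B_{d'}(x, A' r')$ in the second step: without uniform perfectness one cannot produce a reference point $z$ with $d(x, z) \asymp s$, and the three-point quasi-symmetric inequality degenerates. Everything else is either direct from the definitions or monotonicity of $\tau_U$ in $U$.
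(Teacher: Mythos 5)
Your overall strategy is the same as the paper's: sandwich each $d'$-ball between two $d$-balls with a controlled ratio of radii, transfer \eqref{EM} through the monotonicity of $\tau_U$ in $U$, and keep the same exceptional set. Your sandwich lemma and the chain of inequalities in the second step are correct (the paper obtains the analogous inclusions $B_{\theta}(x,r)\subset B_d(x,t)\subset B_d(x,At)\subset B_{\theta}(x,\eta(A)r)$ by citing Mackay--Tyson/Kajino--Murugan, with the outer inclusion holding by the very definition of $t$ as a supremum; your self-contained derivation via a reference point $z$ with $s/K\le d(x,z)<s$ is a legitimate alternative).

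The genuine weak point is the paragraph you dismiss as ``remaining bookkeeping.'' In the bounded case this is where the paper spends most of its proof and where uniform perfectness is actually indispensable in its argument: one must show that $r'<\sigma'\overline{R}_{d'}$ forces $Ls<\sigma\overline{R}_d$, i.e.\ rule out that a $d'$-ball of small relative radius contains a $d$-ball of radius comparable to $\overline{R}_d$. This does not follow from ``quasi-symmetric maps preserve boundedness''; it needs a quantitative argument. The fix uses exactly the trick you already employed: if $s\ge\delta\overline{R}_d$ with $\delta\le\min(\sigma/L,1/2)$, the diameter bound gives $M\setminus B_d(x,\delta\overline{R}_d)\neq\emptyset$, uniform perfectness produces $z$ with $\delta\overline{R}_d/K\le d(x,z)<\delta\overline{R}_d$, hence $d'(x,z)<r'$, and then for every $w\in M$,
\begin{equation}
d'(x,w)\le\eta\!\left(\frac{d(x,w)}{d(x,z)}\right)d'(x,z)\le\eta(K/\delta)\,r',
\end{equation}
so $\overline{R}_{d'}\le 2\eta(K/\delta)\,r'$; taking $\sigma'<\bigl(2\eta(K/\delta)\bigr)^{-1}$ yields the required implication by contraposition (and $r'<\overline{R}_{d'}/2$ already guarantees $B_{d'}(x,r')\neq M$). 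With this step written out, your proof is complete; as written, it asserts rather than proves the part of the theorem where the hypothesis of uniform perfectness does its second, and in the paper's version its only essential, job.
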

\begin{proof}
Let \(\theta\) and \(d\) be two metrics on \(M\). We introduce the following notation: we write \(\overline R_d\) and \(\overline R_\theta\) for the diameters of \(M\) with respect to \(d\) and \(\theta\), respectively. {For any $(x,r)\in M\times(0,\infty)$}, we define 
\[
{B_d(x,r):=\{y\in M: d(x,y)<r\} \ \text{and}\ B_{\theta}(x,r):=\{y\in M: \theta(x,y)<r\}.}
\]

Suppose that $\theta$ and $d$ are quasi-symmetric, which means that there is a homeomorphism $\eta:[0,\infty)\to[0,\infty)$ such that  \begin{equation}
\label{e.QS}
\frac{\theta(x,y)}{\theta(x,z)}\leq\eta\left(\frac{d(x,y)}{d(x,z)}\right),\ \text{for all }x,y,z\in M\text{ with }x\neq z.
\end{equation}
See \cite[Definition 1.2.9]{MT10}. Tukia--V\"{a}is\"{a}l\"{a}'s theorem \cite[Lemma 1.2.19]{MT10} tells us $\overline{R}_{d}<\infty$ if {and} only if $\overline{R}_{\theta}<\infty$. 

Suppose \eqref{EM} holds with respect to $d$, under which $M$ is $K$-uniformly perfect for some $K\in(1,\infty)$. Let $C>0$, $A>1$ and $\sigma\in(0,1)$ be the constant in \eqref{EM} with respect to $d$ and let $\mathcal{N}\subset M$ be the properly exceptional set in \eqref{EM}. We aim to prove that \eqref{EM} holds under $\theta$ with {the} same properly exceptional set $\mathcal{N}$, that is to prove:
there exists $C_{1}>0, A_{1}>1$ and $\delta\in(0,1)$ such that for any $x\in M$ and $r\in(0,\delta\overline{R}_{\theta})$,
\begin{equation}\label{e.EMtheta}
 C_{1}\sup_{y\in B_{\theta}\left(x,r\right)\setminus\mathcal{N}}\mathbb{E}_{y}\left[\tau_{B_{\theta}\left(x,r\right)}\right]\leq \mathbb{E}_{x}\left[\tau_{B_{\theta}\left(x,{A_1r}\right)}\right].
\end{equation}

By \cite[Lemma 1.2.18]{MT10} (see also \cite[Proposition 3.2]{KM23}), there exists $A_{0}=\eta(A)$ and there exists $t>0$ such that \begin{equation}\label{e.EmQS1}
B_{\theta}(x,r)\subset B_{d}(x,t)\subset B_{d}(x,At)\subset B_{\theta}(x,A_{0}r),
\end{equation}where $t$ can be chosen as (see \cite[Proposition 3.2 and (3.6)]{KM23}),\begin{equation}\label{e.QSt}
t:=A^{-1}\sup\{s\in[0,2 \overline{R}_{d}): B_{d}(x,As)\subset B_{\theta}(x,A_{0}r)\}.
\end{equation} 
By \eqref{e.EmQS1},
\begin{equation}\label{e.EmQS2}
\tau_{B_{\theta}(x,r)}\leq\tau_{B_{d}(x,t)}\leq\tau_{B_{d}(x,At)}\leq\tau_{B_{\theta}(x,A_{0}r)}.
\end{equation}
Thus, if $t\in (0,\sigma\overline{R}_{d})$ then\begin{align}
C\sup_{y\in B_{\theta}\left(x,r\right)\setminus\mathcal{N}}\mathbb{E}_{y}\left[\tau_{B_{\theta}\left(x,r\right)}\right]&\leq C\sup_{y\in B_{d}\left(x,t\right)\setminus\mathcal{N}}\mathbb{E}_{y}\left[\tau_{B_{d}\left(x,t\right)}\right]\quad \text{(by \eqref{e.EmQS1} and \eqref{e.EmQS2})}\\
&\leq \mathbb{E}_{x}\left[\tau_{B_{d}\left(x,At\right)}\right] \quad \text{(by \eqref{EM} under $d$)}\\
&\overset{\eqref{e.EmQS2}}{\leq} \mathbb{E}_{x}\left[\tau_{B_{\theta}\left(x,A_{0}r\right)}\right]\leq\mathbb{E}_{x}\left[\tau_{B_{\theta}\left(x,(1+A_{0})r\right)}\right].\label{e.EmQS3}
\end{align}

Suppose $\overline{R}_{\theta}=\overline{R}_{d}=\infty$. Then we are done by choosing $C_{1}=C$, $A_{1}=1+A_{0}$ and choosing an arbitrary $\delta\in(0,1)$ in \eqref{e.EMtheta}.

Suppose $\overline{R}_{\theta}<\infty$ and $\overline{R}_{d}<\infty$. We first use triangle inequality and see 
\begin{equation}\label{e.QS1}
\begin{aligned}
\frac{1}{2}\inf_{z\in M}\frac{d(x,y)}{d(x,z)}&=\frac{1}{2}\frac{d(x,y)}{\sup_{z\in M}d(x,z)}
\leq \frac{d(x,y)}{\overline{R}_{d}}\\
&{\leq}\frac{d(x,y)}{\sup_{z\in M}d(x,z)}=\inf_{z\in M}\frac{d(x,y)}{d(x,z)}
\end{aligned}
\end{equation}
and similarly, 
\begin{align}\label{e.QS2}
\frac{1}{2}\inf_{z\in M}\frac{\theta(x,y)}{\theta(x,z)}\leq \frac{\theta(x,y)}{\overline{R}_{\theta}}\leq\inf_{z\in M}\frac{\theta(x,y)}{\theta(x,z)}.
\end{align}

By \eqref{e.EmQS3}, in order to prove \eqref{e.EMtheta}, it suffices to choose $\delta\in(0,1)$ such that if $r\in(0,\delta\overline{R}_{\theta})$ then $t\in (0,\sigma\overline{R}_{d})$. Let $\delta\in (0,1)$ to be determined later, and let $r\in(0,\delta\overline{R}_{\theta})$. Note that 
\begin{align}
S_{1}&:=\left\{s\in[0,2 \overline{R}_{d}): B_{d}(x,As)\subset B_{\theta}(x,A_{0}r)\right\}\\
&\overset{\eqref{e.QS1},\eqref{e.QS2}}{\subset} \left\{s\in[0,2 \overline{R}_{d}): \text{ if }\inf_{z\in M}\frac{d(x,y)}{d(x,z)}<\frac{As}{\overline{R}_{d}} \text{ then } \inf_{z\in M}\frac{\theta(x,y)}{\theta(x,z)}< 2A_{0}\delta\right\}\\
&\overset{\eqref{e.QS}}{\subset} \left\{s\in[0,2 \overline{R}_{d}): \text{ if }\inf_{z\in M}\frac{d(x,y)}{d(x,z)}<\frac{As}{\overline{R}_{d}} \text{ then } \inf_{z\in M}\frac{d(x,y)}{d(x,z)}< \frac{1}{\eta^{-1}(2^{-1}A_{0}^{-1}\delta^{-1})}\right\}\\
&\overset{\eqref{e.QS1}}{\subset} \left\{s\in[0,2 \overline{R}_{d}): \text{ if }d(x,y)<\frac{1}{2}As\text{ then } d(x,y)< \frac{\overline{R}_{d}}{\eta^{-1}(2^{-1}A_{0}^{-1}\delta^{-1})}\right\}\\
&=\left\{s\in[0,2 \overline{R}_{d}): B_{d}\left(x,\frac{1}{2}As\right)\subset B_{d}\left(x,\frac{\overline{R}_{d}}{\eta^{-1}(2^{-1}A_{0}^{-1}\delta^{-1})}\right)\right\}=:S_{2},\label{e.QS0+2}
\end{align}
where $\eta^{-1}$ is the inverse function of $\eta$.

By the definition of diameter $\overline{R}_{d}$, 
\begin{equation}\label{e.Cdelta1}
\text{if }\eta^{-1}(2^{-1}A_{0}^{-1}\delta^{-1})\geq4K, \text{ then }M\setminus B_{d}\left(x,\frac{K\cdot\overline{R}_{d}}{\eta^{-1}(2^{-1}A_{0}^{-1}\delta^{-1})}\right)\neq\emptyset,
\end{equation}which implies by the $K$-uniform perfectness of $(M,d)$ that 
\begin{equation}\label{e.QSup}
B_{d}\left(x,\frac{K\cdot\overline{R}_{d}}{\eta^{-1}(2^{-1}A_{0}^{-1}\delta^{-1})}\right)\mathbin{\setminus}\  B_{d}\left(x,\frac{\overline{R}_{d}}{\eta^{-1}(2^{-1}A_{0}^{-1}\delta^{-1})}\right)\neq\emptyset.
\end{equation}
Therefore \begin{equation}\label{e.Cdelta2}
t\overset{\eqref{e.QSt}}{=}A^{-1}\sup S_{1}\overset{\eqref{e.QS0+2}}{\leq}A^{-1}\sup S_{2}\overset{\eqref{e.QSup}}{\leq} \frac{2K\cdot\overline{R}_{d}}{A^{2}\cdot\eta^{-1}(2^{-1}A_{0}^{-1}\delta^{-1})}.
\end{equation}

To make $t\in(0,\sigma\overline{R}_{d})$, by \eqref{e.Cdelta1} and \eqref{e.Cdelta2}, {it} suffices to choose $\delta$ such that
\begin{equation}
\delta\in(0,1),\ \eta^{-1}(2^{-1}A_{0}^{-1}\delta^{-1})\geq4K\text{ and } \frac{2K\cdot\overline{R}_{d}}{A^{2}\cdot\eta^{-1}(2^{-1}A_{0}^{-1}\delta^{-1})}< \sigma\overline{R}_{d},
\end{equation}
which can be done by letting \begin{equation}
\delta=\min\left(\frac{1}{2A_{0}}\eta\left(\frac{2K}{A^{2}\sigma}+4K\right)^{-1},\frac{1}{2} \right).
\end{equation}
The proof is complete.
\end{proof}

However, condition \eqref{Eb} is not stable even under a bi-Lipschitz change of metric, as the following example demonstrates. 

\begin{example}[Failure of bi-Lipschitz invariance of \eqref{Eb}]\label{ex3}

In this example, we show that there exist a uniformly perfect metric measure space $(X,d,\mu)$, a symmetric pure jump regular Dirichlet form $(\mathcal{E},\mathcal{F})$ on $L^2(X,\mu)$, and another metric $\theta$ on $X$ that is bi-Lipschitz equivalent to $d$, such that \eqref{EM} holds with respect to both \(d\) and \(\theta\), whereas \emph{\eqref{Eb} holds with respect to \(d\) but fails with respect to \(\theta\)}; see Proposition \ref{p.exeb} below. In particular, \eqref{EM} does not imply \eqref{Eb}; moreover, \eqref{Eb} is not invariant under bi-Lipschitz changes of metric, and hence is not invariant under quasi-symmetric changes of metric, even on uniformly perfect spaces.

Define $N_n:=n+1$, $\delta_n:=2^{-n-4}$, $n\in\mathbb{N}$. Then \(N_n \uparrow \infty\) and \(\delta_n \downarrow 0\). For each \(n\ge 1\), let $\Omega_n$ be a set with $\#\Omega_{n}=4$, say $ \Omega_n:=\{o_n,s_n,a_n,b_n\}$.
Let $Y:=\bigsqcup_{n\in\bN} \Omega_n$ and $X:=Y\times \mathbb R$.

Define two maps $h,g:Y\to \mathbb{R}$ by setting, for each $n\in\mathbb{N}$, \begin{align}
	   h(o_n)&:=-2\delta_n,\quad h(s_n):=0,\quad h(a_n):=\delta_n,\quad h(b_n):=3\delta_n,\\
	   g(o_n)&:=-2\delta_n,\quad g(s_n):=0,\quad g(a_n):=2\delta_n ,\quad g(b_n):=\delta_n.
\end{align}
Define two functions $d_{Y}, \theta_{Y}:Y\times Y\to(0,\infty)$ by \begin{equation}\label{e.dY}
	d_{Y}(x,y):=\begin{cases}
	\abs{h(x)-h(y)},\ & \text{ if }(x,y)\in\Omega_{n}\times\Omega_{n},\ n\in\bN.\\
	|n-m|+1,&  \text{ if }(x,y)\in\Omega_{n}\times\Omega_{m},\ n\neq m,
\end{cases}
\end{equation}
and \begin{equation}\label{e.dT}
	\theta_{Y}(x,y):=\begin{cases}
	\abs{g(x)-g(y)},\ & \text{ if }(x,y)\in\Omega_{n}\times\Omega_{n},\ n\in\bN.\\
	|n-m|+1,&  \text{ if }(x,y)\in\Omega_{n}\times\Omega_{m},\ n\neq m.
	\end{cases}
\end{equation}
Define $d,\theta:X\times X\to \bR$ by 
\begin{equation}\label{e.ef1}
	\begin{cases}
	d(x,y):=d_Y(p,q)\vee |u-v|\\
	\theta(x,y):=\theta_Y(p,q)\vee |u-v|
\end{cases} \text{ for every } x=(p,u),\ y=(q,v).
\end{equation}
\begin{lemma}\label{l.bil}
	The functions $d$ and $\theta$ defined in \eqref{e.ef1} are bi-Lipschitz equivalent metrics on $X$, and the metric spaces $(X,d)$ and $(X,\theta)$ are locally compact, separable, and uniformly perfect.
\end{lemma}
\begin{proof}
To prove that $d$ is a metric, it suffices to verify the triangle inequality. For this, we first verify the triangle inequality for \(d_Y\). Inside each block $\Omega_n$, the distance \(d_Y\) is induced by the Euclidean metric on $\bR$. Hence the triangle inequality holds whenever the three points belong to the same block. Let \(x,y,z\in Y\). If two points are in the same block and the third point is in a different block, we may assume \(x,y\in \Omega_n\) and \(z\in \Omega_m\), \(m\neq n\); then, by \eqref{e.dY},
\begin{equation*}
 d_Y(x,z)=d_Y(y,z)=|n-m|+1>d_Y(x,y),
\end{equation*}
and hence
\[
    d_Y(x,z)\vee d_Y(y,z) \le d_Y(x,y)+d_Y(x,z)\wedge d_Y(y,z).
\]
Finally, suppose $x\in \Omega_n$, $y\in \Omega_k$, $z\in \Omega_m$, with $n,k,m$ pairwise distinct. Then $d_Y(x,z)=|n-m|+1$, while
\[
    d_Y(x,y)+d_Y(y,z) = (|n-k|+1)+(|k-m|+1)\geq |n-m|+2.
\]
Thus $d_Y(x,z)\le d_Y(x,y)+d_Y(y,z)$. Hence \(d_Y\) is a metric.
 
Given three points $x=(p,u)$, $y=(q,v)$, and $z=(r,w)$ in $X=Y\times \bR$, since \(d_Y\) is a metric, we have
\[
\begin{aligned}
    d(x,z)
    &=
    d_Y(p,r)\vee |u-w|   \le
    \bigl(d_Y(p,q)+d_Y(q,r)\bigr)
    \vee
    \bigl(|u-v|+|v-w|\bigr) \\
    &\le
    \bigl(d_Y(p,q)\vee |u-v|\bigr)
    +
    \bigl(d_Y(q,r)\vee |v-w|\bigr) 
    =
    d(x,y)+d(y,z).
\end{aligned}
\]
Thus the triangle inequality for \(d\) holds and therefore $d$ is a metric on $X$. The same proof shows that \(\theta\) is also a metric on $X$, and $(X,\theta)$ is also locally compact and separable. By \eqref{e.dY}, \eqref{e.dT}, and \eqref{e.ef1}, we know that $3^{-1}d_Y\le \theta_Y\le 3d_Y$, so $d$ and $\theta$ are bi-Lipschitz equivalent. 

The local compactness and separability of $(X,d)$ follow from the corresponding properties of $(Y,d_{Y})$ and $\bR$. To show the uniform perfectness of $(X,d)$, fix \(x=(p,u)\in X\) and \(r\in(0,\infty)\). The point \((p,u+r/2)\) satisfies $d(x,(p,u+r/2))=r/2$.
Hence $B_d(x,r)\setminus B_d(x,r/3)\neq \emptyset$. Since $d$ and $\theta$ are bi-Lipschitz equivalent, $(X,\theta)$ is also locally compact, separable, and uniformly perfect.
\end{proof}

\begin{lemma}
	Let $\#$ be the counting measure on $Y$ and $\rL$ be the Lebesgue measure on $\bR$. Let $\mu:=\#\otimes\rL$ be the product measure on $X=Y\times \bR$. Then the metric measure spaces $(X,d,\mu)$ and $(X,\theta,\mu)$ are volume doubling \eqref{VD} and reverse volume doubling \eqref{RVD}.
\end{lemma}

\begin{proof}
Since $B_d((p,u),r)=B_{d_Y}(p,r)\times (u-r,u+r)$, we have
\begin{equation}\label{e.vd}
 \frac{\mu(B_d((p,u),2r))}{\mu(B_d((p,u),r))}=2 \frac{\#B_{d_Y}(p,2r)}{\#B_{d_Y}(p,r)}.
\end{equation}
Therefore, to show that $\mu$ satisfies \eqref{VD} on $(X,d)$, it suffices to prove that $\#$ satisfies \eqref{VD} on $(Y,d_{Y})$.
\begin{equation}\label{e.nu}
   \#B_{d_Y}(p,2r)
    \le
    8\#B_{d_Y}(p,r),\ \text{for all } (p,r)\in Y\times(0,\infty).
\end{equation}
Indeed, each block $\Omega_n=\{o_n,s_n,a_n,b_n\}$ contains exactly four points. If $0<r\leq 1$, then $B_{d_Y}(p,2r)$ is contained in the same block as $p$, and hence
$\#(B_{d_Y}(p,2r))\le 4$. Since $\#(B_{d_Y}(p,r))\ge 1$, we obtain \eqref{e.nu} for $(p,r)\in Y\times(0,1]$. It remains to consider $(p,r)\in Y\times(1,\infty)$. Suppose $p\in \Omega_n$. By \eqref{e.dY}, $B_{d_Y}(p,r)$ can meet only those blocks \(\Omega_m\) for which
$|n-m|+1<r$. Hence the number of blocks intersecting \(B_{d_Y}(p,r)\) is 
$2\lceil r-1\rceil -1$. Since each block has four points, we have $\#B_{d_Y}(p,r)=  4(2\lceil r-1\rceil -1)=8\lceil r\rceil-12\geq 2\lceil r\rceil$, because $\lceil r\rceil\geq 2$. On the other hand, $ \#B_{d_Y}(p,2r)=8\lceil 2r\rceil-12\leq 16\lceil r\rceil$, which gives \eqref{e.nu} for $(p,r)\in Y\times(1,\infty)$. Thus, $(X,d,\mu)$ is \eqref{VD}. The reverse volume doubling property \eqref{RVD} follows from the uniform perfectness in Lemma \ref{l.bil} and \cite[Exercise 13.1]{Hei01}. The same argument applies to $(X,\theta,\mu)$.
\end{proof}
Define a function $J:(X\times X)\setminus \diag\to [0,\infty)$ by  \begin{equation}\label{e.efg}
	J((p,u),(q,v)):=\begin{dcases}
		1,\ &\text{ if }(p,q)\in\{(a_{n},s_{n}),(s_{n},a_{n})\}\text{ for some }n\in\bN, \text{ and if }u=v,\\
		N_{n}^{-1} ,\ &\text{ if }(p,q)\in\{(a_{n},b_{n}),(b_{n},a_{n})\}\text{ for some }n\in\bN, \text{ and if }u=v,\\
		N_{n}^{-1},\ &\text{ if }(p,q)\in\{(o_{n},s_{n}),(s_{n},o_{n})\}\text{ for some }n\in\bN, \text{ and if }u=v,\\
		0,\ &\text{ otherwise}.
	\end{dcases}
\end{equation}
\begin{proposition}\label{p.exeb}
	Let $J:(X\times X)\setminus \diag\to [0,\infty)$ be given by \eqref{e.efg}. \begin{enumerate}[label=\textup{({\arabic*})},align=right,leftmargin=*,topsep=5pt,parsep=0pt,itemsep=2pt]
	\item\label{it.exeb1}  Define a symmetric bilinear form \((\mathcal E,\mathcal F)\) by
  \begin{equation}\label{e.eDF}
\begin{aligned}
 \mathcal E(f,g)
    &:=\frac{1}{2}\int_{(X\times X)\setminus \diag}(f(x)-f(y))(g(x)-g(y))J(x,y)\mu(\dif x)\mu(\dif y)\\
 \mathcal{F} &:=\text{the closure of }\left\{f\in C_{c}(X):\mathcal{E}(f,f)<\infty \right\}\text{ under the norm }\sqrt{\mathcal{E}_{1}}.
\end{aligned}
\end{equation}
Then $(\sE,\sF)$ is a regular symmetric Dirichlet form on $L^{2}(X,\mu)$.
\item\label{it.exeb3} With respect to $d$, the conditions \eqref{EM} and \eqref{Eb} hold for $(\sE,\sF)$. With respect to $\theta$, the condition \eqref{EM} holds while \eqref{Eb} fails for $(\sE,\sF)$.
\item\label{it.exeb4} For any scale function $W$, the conditions $\textup{(\hyperlink{E<}{$\operatorname{\mathrm{E}_{\leq}}$})}$, \eqref{FK}, and \textup{(\hypertarget{L>}{}\hyperlink{L>}{$\operatorname{\lambda_{\geq}}$})} all fail with respect to both $d$ and $\theta$. However, \textup{(\hypertarget{L<}{}\hyperlink{L<}{$\operatorname{\lambda_{\leq}}$})} and \eqref{J<} hold with $W(x,r)\equiv r$.
\end{enumerate}
\end{proposition}
\begin{proof}\begin{enumerate}[label=\textup{({\arabic*})},align=right,leftmargin=*,topsep=5pt,parsep=0pt,itemsep=2pt]
	\item[\ref{it.exeb1}]  It suffices to prove regularity, as the remaining assertions are evident. Clearly, the space $\mathcal{F}\cap C_{c}(X)$ is dense in 
$\mathcal{F}$ under the $\sqrt{\mathcal{E}_{1}}$-norm by definition. We will show that $C_{c}(X)\subset \sF$ so that $\mathcal{F }\cap C_{c}(X)=C_{c}(X)$ is dense in $(C_{c}(X),\norm{\cdot}_{\sup})$. Let \(f\in C_c(X)\). A direct calculation gives
\begin{align}
	 \mathcal E(f,f)&=\sum_{n\in\bN}\int_{\mathbb R}
   \Bigg( (f(a_n,u)-f(s_n,u))^2
    +\frac1{N_n}(f(a_n,u)-f(b_n,u))^2 \\
&\qquad \qquad\qquad+\frac1{N_n}(f(s_n,u)-f(o_n,u))^2 \Bigg) \dif u    \label{e.exDF}
\end{align}
Since $\mathrm{supp}(f)$ meets only finitely many blocks \(\Omega_n\) and is bounded, the energy \(\mathcal E(f,f)\) is a finite sum of finite integrals.
Consequently, $\mathcal{E}(f,f)<\infty$ and $f\in\sF$.
\item[\ref{it.exeb3}]  Let $(\{Z_{t}\}_{t\in[0,\infty)},\{\bP_{x}\}_{x\in X})$ be a $\mu$-symmetric Hunt process on $X$ corresponding to the regular Dirichlet form $(\sE,\sF)$ on $L^{2}(X,\mu)$.  We first recall that, by \cite[Theorem 3.3.3 \text{with $A=D^c$}]{Nor97}, \begin{equation}
\bE_{x}[\tau_{\{x\}}]=Q(x)^{-1},\ \text{for all } x\in X,
\end{equation} and for any non-empty subset $D$, 
\begin{equation}\label{e.jump}
\bE_{x}[\tau_{D}] = \frac1{Q(x)}  +  \sum_{y\in D\setminus\{x\}} \frac{J(x,y)}{Q(x)}\bE_{y}[\tau_{D}],\ \text{for all }x\in D,
\end{equation}
where \begin{equation}
	Q(x):=\sum_{y\in X\setminus \{x\}}J(x,y),\ x\in X.
\end{equation}

Fix $n\in\bN$. In the \(d_Y\)-metric, the possible proper \(d_Y\)-balls are
\begin{align}
	   &\{o_{n}\},\ \{s_{n}\},\ \{a_{n}\},\ \{b_{n}\},\ \{o_{n},s_{n}\},\ \{s_{n},a_{n}\},\ \\
	  & \{a_{n},b_{n}\}, \ \{o_{n},s_{n},a_{n}\},\ \{s_{n},a_{n},b_{n}\},\  \bigsqcup_{k\in[i,j]\cap\bN}\Omega_{k} \text{ with } n\in [i,j]\cap\bN.
\end{align}
(The set \(\{a_n,b_n\}\) may occur as a \(d_Y\)-ball centered at \(b_n\), but it
cannot occur as a \(d_Y\)-ball centered at \(a_n\), because $d_Y(a_n,s_n)=\delta_n<2\delta_n=d_Y(a_n,b_n)$.) Solving \eqref{e.jump} gives the following values.
\begin{enumerate}[label=\textup{(\roman*)},align=right,leftmargin=*,topsep=5pt,parsep=0pt,itemsep=2pt]
\item\label{it.p1} For singletons, we have $ \bE_{a_{n}}[{\tau_{\{a_{n}\}}}]=\bE_{s_{n}}[{\tau_{\{s_{n}\}}}]={N_{n}}({N_{n}+1})^{-1}$ and $\bE_{b_{n}}[{\tau_{\{b_{n}\}}}]=\bE_{o_{n}}[{\tau_{\{o_{n}\}}}]=N_{n}$.
\item\label{it.p2} For two-point sets, $\bE_{a_{n}}[{\tau_{\{s_{n},a_{n}\}}}]=\bE_{s_{n}}[{\tau_{\{s_{n},a_{n}\}}}]=N_{n}$, $\bE_{a_{n}}[{\tau_{\{a_{n},b_{n}\}}}]=2$, $\bE_{b_{n}}[{\tau_{\{a_{n},b_{n}\}}}]=N_{n}+2$, and $\bE_{s_{n}}[{\tau_{\{o_{n},s_{n}\}}}]=2$, $\bE_{o_{n}}[{\tau_{\{o_{n},s_{n}\}}}]=N_{n}+2$.
\item\label{it.p3} For three-point sets,
\[
    \bE_{o_{n}}[\tau_{\{o_{n},s_{n},a_{n}\}}]=4N_n+2,
    \quad
    \bE_{s_{n}}[\tau_{\{o_{n},s_{n},a_{n}\}}]=3N_n+2,
    \quad
    \bE_{a_{n}}[\tau_{\{o_{n},s_{n},a_{n}\}}]=3N_n,
\]
\[
    \bE_{s_{n}}[\tau_{\{s_{n},a_{n},b_{n}\}}]=3N_n,
    \quad
    \bE_{a_{n}}[\tau_{\{s_{n},a_{n},b_{n}\}}]=3N_n+2,
    \quad
    \bE_{b_{n}}[\tau_{\{s_{n},a_{n},b_{n}\}}]=4N_n+2.
\]
\item\label{it.p4}  For all $i\leq j$ such that $n\in[i,j]\cap \bN$ and for all $x\in \Omega_{n}$, we always have $\bE_x[\tau_{\Omega_{n}}]=\infty$, because \(\Omega_{n}\) is an invariant class for the process $Z$.
\end{enumerate}
Combining the above discussion, we always have 
\[
    \sup_{z\in B_{d_Y}(x,r)} \bE_z[\tau_{B_{d_Y}(x,r)}] \le 2 \bE_x[\tau_{B_{d_Y}(x,r)}].
\]
Since the process $Z$ does not move in the \(\mathbb R\)-coordinate, the same estimate holds for \(d\)-balls in \(X\). Therefore, $\sup_{z\in B_d(x,r)}
    \mathbb E_z\tau_{B_d(x,r)}
    \le 2\mathbb E_x\tau_{B_d(x,r)}$.
That is, \eqref{Eb} holds with respect to \(d\). By Proposition \ref{p.facts}-\ref{it.fact3}, \eqref{EM} also holds with respect to \(d\). By Theorem \ref{t.QS} and Lemma \ref{l.bil}, \eqref{EM} also holds with respect to \(\theta\).
We then show that \eqref{Eb} fails with respect to \(\theta\). Note that $\theta_Y(a_n,b_n)=\delta_n$ and
$ \theta_Y(a_n,s_n)=2\delta_n$. Then $B_{\theta_Y}\left(a_n,3\delta_n/2\right)=\{a_n,b_n\}$.
For this two-point set, the computation above gives $\mathbb E_{a_n}[\tau_{\{a_n,b_n\}}]=2$,
whereas $ \mathbb E_{b_n}[\tau_{\{a_n,b_n\}}]=N_n+2$. Hence
\[
\frac{
    \sup_{z\in B_{\theta_Y}\left(a_n,\frac32\delta_n\right)}
    \mathbb E_z\left[\tau_{B_{\theta_Y}\left(a_n,\frac32\delta_n\right)}\right]
}{   \mathbb E_{a_n}\left[\tau_{B_{\theta_Y}\left(a_n,\frac32\delta_n\right)}\right]
}=\frac{N_n+2}{2}\uparrow \infty\ \text{ as }n\uparrow \infty.
\]
The same conclusion holds in \(X=Y\times\mathbb R\), for instance at the
points \((a_n,0)\), because the process keeps the second coordinate fixed.
Thus there is no uniform constant \(C\) such that $\sup_{z\in B_\theta(x,r)}
    \mathbb E_z\tau_{B_\theta(x,r)}
    \le
    C\,\mathbb E_x\tau_{B_\theta(x,r)}$ for all $(x,r)\in X\times (0,\infty)$. Therefore \eqref{Eb} fails with respect to \(\theta\).
\item[\ref{it.exeb4}] Since $\textup{(\hyperlink{E<}{$\operatorname{\mathrm{E}_{\leq}}$})}$ fails by \ref{it.p4}, the Faber--Krahn inequality \eqref{FK} also fails by Proposition \ref{p.facts}-\ref{it.fact5}. Now we show that (\hypertarget{L>}{}\hyperlink{L>}{$\operatorname{\lambda_{\geq}}$}) fails with respect to $d$.
Fix $n\ge1$ and $u_0\in\mathbb R$. Let $x_n:=(a_n,u_0)$ and $r_n:=4\delta_n$; then $B_d(x_n,r_n)=\Omega_n\times (u_0-r_n,u_0+r_n)$. Choose a non-zero function $\varphi\in C_c((u_0-r_n,u_0+r_n))$ and define $f(p,u):=\one_{\Omega_{n}}(p)\varphi(u)$ for $(p,u)\in Y\times \bR=X$. Then $f$ is supported in \(B_d(x_n,r_n)\), and $\|f\|_{L^2(B_d(x_n,r_n),\mu)}^2>0$.
Since $f$ is constant on the block \(\Omega_n\) for each fixed
\(u\) and there are no jumps between different blocks, we have from \eqref{e.eDF} that $\mathcal E(f,f)=0$. It follows that $\lambda\bigl(B_d(x_n,r_n)\bigr)=0$ by definition. The same argument applies to \(\theta\). Thus (\hypertarget{L>}{}\hyperlink{L>}{$\operatorname{\lambda_{\geq}}$}) fails for both $d$ and $\theta$.\\
Next, we show that (\hypertarget{L<}{}\hyperlink{L<}{$\operatorname{\lambda_{\leq}}$}) holds for both $d$ and $\theta$. Let $x=(p,u)\in Y\times \mathbb{R}$ and $r>0$. Then $p\in \Omega_n$ for some $n\in \mathbb{N}$. If $r> 1$, then $B_{d_Y}(p,r)$ contains the block \(\Omega_n\) in $Y$, since $\mathrm{diam}(\Omega_n)<1$. From the above argument, we have $\lambda\bigl(B_d(x,r)\bigr)=0$ in this case; thus (\hypertarget{L<}{}\hyperlink{L<}{$\operatorname{\lambda_{\leq}}$}) holds. Now, assume that $r\leq 1$. We choose the test function $f(p,\cdot)\in L^2(\mathbb{R})$ and $f(q,\cdot)=0$ for any $q\in Y\setminus \{p\}$. Since $p$ is connected to at most two points in $\Omega_n$ with rate $1$ or ${N_n}^{-1}$, \eqref{e.exDF} gives
\begin{equation*}
    \mathcal E(f,f)\le\sum_{n\in\bN}\int_{\mathbb R}
   \Bigg( f(p,u)^2
    +\frac1{N_n}f(p,u)^2 \Bigg) \dif u = (1+{N_n}^{-1})\|f\|_{L^2(B_d(x,r))}^2,
\end{equation*}
which implies $\lambda\bigl(B_d(x,r)\bigr)\leq (1+{N_n}^{-1})\leq2 \leq 2r^{-1}$. Similarly,
$\lambda\bigl(B_\theta(x,r)\bigr)\leq 2r^{-1}$.
Therefore (\hypertarget{L<}{}\hyperlink{L<}{$\operatorname{\lambda_{\leq}}$}) holds with $W(x,r)=r$ for both $d$ and $\theta$. The upper bounds \eqref{J<} for $J$ under $d$ and $\theta$ follow from a direct calculation using \eqref{e.efg}.
\end{enumerate}
\end{proof}
\end{example}

Formally, condition \eqref{EM} is not easy to verify directly; however, this challenge can be circumvented by leveraging Theorem \ref{t.main} for its validation.
\begin{example}\label{ex.ex1}
Let $(\mathbb{R}^n,\lvert\cdot\rvert,\dif x)$ be the $n$-dimensional Euclidean space with Euclidean distance and Lebesgue measure. Let $(\mathcal{E},\mathcal{F})$ be a Dirichlet form in $L^{2}(%
\mathbb{R}^n,\dif x)$ given as in \cite[formula (1.5)]{CK10} with $A(x)$ being the identity matrix, that is, 
\begin{equation}\label{e.EXE}
\mathcal{E}(u,v)=\frac{1}{2}\int_{\mathbb{R}^n}\nabla u(x)\cdot \nabla
v(x)\dif x+\int_{\mathbb{R}^n}\int_{\mathbb{R}
^{n}}(u(x)-u(y))(v(x)-v(y))J(x,y)\dif x\dif y,
\end{equation}
where the jump kernel is given by
\begin{equation}
J(x,y)=\frac{1}{|x-y|^{n+2s}} \label{jump}
\end{equation}%
for some $0<s<1$, and the domain $\mathcal{F}$ given by 
\begin{equation}\label{e.EXF}
\mathcal{F}=\overline{C_{c}^{1}(\mathbb{R}^n)}^{\mathcal{E}_{1}},
\end{equation}
where $C_{c}^{1}(\mathbb{R}^n)$ is the space of all $C^{1}$-functions {with compact support on $\mathbb{R}^n$}. As $C_{c}^{1}(\mathbb{R}%
^{d})\subset \mathcal{F}\cap C_{c}(\mathbb{R}^n)$ is both dense in $%
\mathcal{F}$ under the norm of $\sqrt{\mathcal{E}(\cdot ,\cdot )+\lVert\cdot
\rVert_{L^{2}(\mathbb{R}^n,\dif x)}^{2}}$ and dense in $C_{c}(\mathbb{R}^n)$ under the supremum norm,
the bilinear form $(\mathcal{E},\mathcal{F})$ is a regular Dirichlet form in $L^{2}(%
\mathbb{R}^n,\dif x)$.

In this case, conditions \eqref{VD}, \eqref{RVD} are trivially satisfied with $
V(x,r)=cr^{n}$, where $c$ is the volume of the unit ball in $\mathbb{R}^n$. By \eqref{jump}, all the hypotheses in \cite[Theorem 1.4]{CK10} are satisfied. Therefore, the heat kernel of the Dirichlet form $(\mathcal{E},\mathcal{F})$ exists, and satisfies the following two-sided estimate: there are constants $c_{j}$, $j\in\{1,2,3,4\}$ such that for every $t>0$ and $x,y\in \mathbb{R}^n$, 
\begin{align}
&\phantom{\ \leq} c_{1}\left( t^{-n/2}\wedge t^{-n/2s}\right) \wedge \left(
p^{(c)}(t,c_{2}|x-y|)+p^{(j)}(t,|x-y|)\right)\\
& \leq p_{t}(x,y)  \\
& \leq c_{3}\left( t^{-n/2}\wedge t^{-n/2s}\right) \wedge \left(
p^{(c)}(t,c_{4}|x-y|)+p^{(j)}(t,|x-y|)\right) ,  \label{11}
\end{align}%
where \[p^{(c)}(t,r):=t^{-n/2}\exp(-\frac{r^{2}}{t})\text{ and }p^{(j)}(t,r):=t^{-n/2s}\wedge \frac{t}{r^{n+2s}}.\]

Define the scale function $W$ by 
\begin{equation*}
W(x,r):=r^{2}\wedge r^{2s}\ \text{for $x\in \mathbb{R}^n$ and $r>0$}.
\end{equation*}%
We will show that \eqref{UE} holds with $W$, that is
\begin{equation}
p_{t}(x,y)\leq C\left( ( t^{-n/2}\wedge t^{-n/2s})\wedge \frac{t}{|x-y|^n (|x-y|^2\wedge |x-y|^{2s})}\right) .\label{UE2}
\end{equation}
Indeed, using the elementary inequalities: 
\begin{equation*}
\frac{1}{a\wedge b}\geq \frac{1}{2}\left( \frac{1}{a}+\frac{1}{b}\right) 
\text{ \ and \ }\frac{1}{a}\geq \exp \left( -e^{-1}a\right)\ \text{ for any $a,b>0$,}
\end{equation*}%
we know that%
\begin{align}
\frac{t}{r^{n}(r^{2}\wedge r^{2s})} &\geq \frac{1}{2}\left( \frac{t}{r^{n+2}%
}+\frac{t}{r^{n+2s}}\right) =\frac{1}{2}\left( t^{-n/2}\left( \frac{r^{2}}{t}%
\right) ^{-n/2-1}+\frac{t}{r^{n+2s}}\right)  \\
&\geq \frac{1}{2}\left( t^{-n/2}\exp \left( -\left( \frac{n}{2}+1\right)
e^{-1}\frac{r^{2}}{t}\right) +t^{-n/(2s)}\wedge \frac{t}{r^{n+2s}}\right) .
\end{align}%
From this, it follows that%
\begin{align}
&\phantom{\ \leq}(t^{-n/2}\wedge t^{-n/2s})\wedge \frac{t}{r^{n}(r^{2}\wedge r^{2s})} \\
&\geq \frac{1}{2}\left( (t^{-n/2}\wedge t^{-n/2s})\wedge \left(
p^{(c)}(t,c_{4}r)+p^{(j)}(t,r)\right) \right) , \label{10}
\end{align}
with $c_{4}=\sqrt{\left( n/2+1\right) e^{-1}}$. Letting $r=|x-y|$ in \eqref{10} and combining \eqref{11}, we obtain \eqref{UE2}.

Since $(\mathcal{E},\mathcal{F})$ is conservative by 
\cite[Theorem 2.2]{CK10}, we can now apply Theorem \ref{t.main} and see that conditions \eqref{FK}, \textup{(\hyperlink{L<}{$\operatorname{\lambda_{\leq}}$})} and \eqref{EM} hold. 

However, the heat kernel for $(\mathcal{E},\mathcal{F})$ cannot have a sub-Gaussian heat kernel bound like \eqref{UEloc}. Since for $t\in (0,1)$ and $x,y$ satisfying $|x-y|>1$, we see from the lower bound of \eqref{11} that
\begin{equation*}
  p_t(x,y)\geq \frac{Ct}{|x-y|^{n+2s}},
\end{equation*}
which implies an exponential decay of heat kernel cannot hold for large $|x-y|$.
\end{example}

\begin{example}[\text{\cite[Example 2.1]{GHH24a}}]
On $(\mathbb{R}^n,\lvert\cdot\rvert,\dif x)$, for $0<\varepsilon<\beta<2$, set  
\begin{equation*}
W(x,r) = \left(\frac{|x|+r}{r}\right)^\varepsilon r^\beta,\quad x \in 
\mathbb{R}^n,~r >0.
\end{equation*}
It is easy to prove that $W(x,\cdot)$ is strictly increasing {and} satisfies \eqref{eq:vol_0} with $\beta_1=\beta-\epsilon$,
$\beta_2=\beta$ and $C=2^\epsilon$. In this case, conditions \eqref{VD} and \eqref{RVD} trivially hold. Consider {a symmetric measurable function} $J$ satisfying  
\begin{equation*}
{\frac{C^{-1}}{|x-y|^n W(x,|x-y|)} \leq J(x,y) \leq \frac{C}{|x-y|^n W(x,|x-y|)},}
\end{equation*}
{for some constant $C>1$,} and denote the Dirichlet form associated with the above jump kernel $J$ as in \eqref{EJ} by $\mathcal{E}^{(W)}$.
By \cite[Lemma 8.1 and Remark 4.16-(2)]{GHH23}, we see that condition \eqref{FK} holds. Also, by \cite[Lemma 3.2]{CGHL25} {and \cite[Lemma 7.15]{GHH23}}, we obtain condition \eqref{Gcap}. Hence by {Proposition \ref{p.facts}-\ref{it.fact2}}, we obtain condition \eqref{UE}, thus showing conditions \textup{(\hyperlink{L<}{$\operatorname{\lambda_{\leq}}$})}, \eqref{EM} by applying Theorem \ref{t.main}.
\end{example}

\noindent \textbf{Acknowledgments.} 
The authors are grateful to Jiaxin Hu for suggesting the problem tackled in this paper. We also thank Eryan Hu for helpful discussions on this paper. The authors are very grateful to the anonymous referees for their careful reading of this manuscript and many helpful suggestions. Aobo Chen is supported by the research grant (VIL73729) from Villum Fonden. Zhenyu Yu is supported by the Natural Science Foundation of Hunan Province, China (No.~2025JJ60039) and Innovation Research Foundation of National University of Defense Technology (No.~ZK25-05).

\noindent \textbf{Statements:} All authors contributed equally to this work and approved the final manuscript; no datasets were generated or analyzed during the current study; the authors declare no conflict of interest; ethical approval, patient consent, permission to reproduce material from other sources, and clinical trial number are not applicable.

\vspace{10pt}
\noindent Aobo Chen

\vspace{3pt}
\noindent Department of Mathematics, Aarhus University, 8000 Aarhus C, Denmark
\vspace{3pt}

\noindent \texttt{aobochen.math@hotmail.com}, \texttt{aobochen@math.au.dk}

\vspace{10pt}
\noindent Zhenyu Yu

\vspace{3pt}
\noindent College of Science, National University of Defense Technology, Changsha 410073, China

\vspace{3pt}
\noindent \texttt{yuzy23@nudt.edu.cn}

\end{document}